\newtheorem{theorem}{Theorem}[section]
\newtheorem{lemma}[theorem]{Lemma}
\theoremstyle{definition}
\newtheorem{remark}[theorem]{Remark}
\newtheorem{definition}[theorem]{Definition}
\newcommand\numberthis{\addtocounter{equation}{1}\tag{\theequation}}
\title{Combinatorial spectra using polynomials}
\author{Sylwia Cichacz$^{1}$, Martin Dz\'urik$^2$\\ ~~ \\
\normalsize $^1$AGH University of Science and Technology, al. A. Mickiewicza 30, 30-059 Krakow, Poland\\
cichacz@agh.edu.pl\\
\normalsize $^2$
Masaryk University, Kotl\'a\v rsk\'a 2, CZ-61137 Brno, Czech Republic\\
451859@mail.muni.cz}
\date{\today}
\begin{document}

\maketitle

\begin{abstract}
In
this paper we would like to introduce some new methods for studying magic type-colorings of graphs or domination of graphs, based on combinatorial
spectrum on polynomial rings. We hope that this concept will be potentially
useful for the graph theorists.
    
\end{abstract}
\section{Introduction}
A \textit{Hamiltonian cycle} of a graph  is a cycle  containing all vertices of the graph. A graph (digraph) is \textit{Hamiltonian} if it has a Hamiltonian
cycle. The study for the existence of Hamiltonian cycles is an important topic in graph
theory.  A \textit{Hamiltonian walk} of a graph is a closed walk passing all
vertices of the graph. While not every graph is Hamiltonian, every connected graph
contains a Hamiltonian walk. The Hamiltonian number $h(G)$ of a graph $G$ is the minimum length of a Hamiltonian walk in the graph. The concept of Hamiltonian number was introduced
by Goodman and Hedetniemi for the study of hamiltonicity (see \cite{GH}).

The hamiltonian number can be also defined by cyclic orders. Namely, a Hamiltonian walk can be expressed as a cyclic ordering of $V (G)$. Let
$\pi= (v_0, v_1,\ldots , v_{n-1}, v_n = v_0)$ be a cyclic ordering of $V (G)$. Denote $d(\pi)$ as
the sum of the distances between pairs of consecutive vertices in $\pi$. That is,
$$d(\pi) = \sum_{i=1}^nd(v_i, v_{i+1}).$$
Let $H(G)$ denote the set of values $d(\pi)$ over all cyclic orderings $\pi$ of $V (G)$.
Then the minimum value in $H(G)$ is indeed the Hamiltonian number  $h(G)$ \cite{ChThZh}.

In \cite{Dzurik} the concept of the Hamiltonian spectrum of a graph $G$ was generalized to $H$-Hamiltonian spectrum of the graph $G$ denoted by ${\mathscr{H}}_{H}(G)$.
 Let $G$, $H$ be graphs such that $|V (G)| = |V (H)|$ and
$f \colon V (H) \to V (G)$ be a bijection, then we call $f$ a \textit{pseudoordering on the graph $G$ by $H$}, denote 
$$s_H(f,G) = \sum_{\{x,y\}\in E(H)}d_G(f(x), f(y)).$$
 We will call $s_H(f, G)$ the sum of the pseudoordering $f$. Then
\[{\mathscr{H}}_{H}(G)=\{s_H(f, G) \colon f  \text{ pseudoordering on  $G$ {by} $H$}\}\]
is the $H$-\textit{Hamiltonian spectrum of the graph} $G$.
Observe that the $C_{|V (G)|}$-Hamiltonian spectrum of a graph $G$ is equal to the
Hamiltonian spectrum of $G$ for $|V (G)| \geq 3$.

The approach of $H$-Hamiltonian spectrum led to one more generalization, which is called the \textit{combinatorial spectrum} introduced in \cite{Dz}. This spectrum is now for $R$-weighted graphs,
where $R$ is a ring with operations $+,\cdot$. Let $H$ and $G$ be $R$-weighted complete graphs such that
$|V(G)| = |V(H)|$ with weighted functions
$\nu_H \colon E(H) \to R$, and $\nu_G \colon E(G) \to R$,
and $f \colon V(H) \to V(G)$ be a bijection. We define $H *_f G$ as follows
$$V(H *_f G) = V(H),\;\;\; E(H *_f G) = E(H),\;\;\; \nu_{H*_f G}(e) = \nu_H(e) \cdot \nu_G(f(e)).$$
Let now 
$$H * G = \{H *_f G \colon f \colon V(G) \to V(H)\,\,\mathrm{is}\;\;\mathrm{a}\;\; \mathrm{bijection}\}.$$

For families  $\mathscr{H}$ and $\mathscr{G}$
of graphs we define:
\[
\mathscr{H}*\mathscr{G}=
\bigcup_{\substack{G \in \mathscr{G}\\
H \in \mathscr{H}}} 
H*G.
\]

Let $G$ be an (connected) unweighted graph then we define $R$-weighted complete graphs $I(G)$, $d(G)$ as follows,

\[
V(I(G))=V(d(G))=V(G),
\]
\[
\nu_{I(G)}(e)=
\begin{cases} 
1  & \text{for }e \in E(G)\\
0  &\text{for }e\notin E(G)
\end{cases},
\quad \nu_{d(G)}(\{x,y\}) = d_G(x,y).
\]

Let $G$ be a $R$-weighted complete graph with weighted function $\nu_G$ then we define:
\[
s(G)=\sum_{e \in E(G)} \nu_{G}(e).
\]
And for $\mathscr{G} \in \mathscr{P}(R-Graphs_n)$ we define the combinatorial spectra of $G$ by:
\[
s(\mathscr{G})=\{s(G)|G\in\mathscr{G} \}.
\]
{Where $\mathscr{P}(R-Graphs_n)$ is a class of all sets of $R$-weighted complete graphs of order of $n$.
}

{
We will also need the following definition from \cite{Dz}:

Let $H_1,H_2$ be $R-Graphs_n$ such that $V(H_1)=V(H_2)$ then we define an operation $+$ on $ R-Graphs_n$ as follows
\[
V(H_1+H_2)=V(H_1), \quad 
\nu_{H_1+H_2}(e)=\nu_{H_1}(e)+\nu_{H_2}(e).
\]
We also define an operation on $ \mathscr{P}(R-Graphs_n)$ as follows, let $\mathscr{H}_1,\mathscr{H}_2\in \mathscr{P}(R-Graphs_n)$ such that all graphs have the same set of vertices then  
\[
\mathscr{H}_1+\mathscr{H}_2=\left\{ H_1+H_2\middle|H_1\in \mathscr{H}_1,  H_2\in \mathscr{H}_2              
\right\}
\]

}


The Combinatorial Nullstellensatz is an algebraic technique first
introduced by Noga Alon \cite{Alon} in 1999. Being closely related to the
famous Hilbert’s Nullstellensatz, it has extensive applications in combinatorics
and number theory where various results are obtained by
analyzing roots of well-chosen polynomials.
\begin{theorem}[Combinatorial Nullstellensatz, \cite{Alon}]
Let  $p(x_1,\ldots,x_n)$ be a polynomial in $F[x_1,\ldots,x_n]$  of degree $t$, where $F$  is an arbitrary field.

Suppose that the coefficient of the monomial of maximum degree in $\prod_{i=1}^nx^{t_i}_{{i}}$ is nonzero, where $t=\sum_it_i$  and each  $t_i\geq 0$. Then, if  $S_1,\ldots,S_n$ are subsets of $F$  with $|S_i|>t_i$, there is $(s_1,\ldots,s_n)\in S_1\times\ldots\times S_n$ such that $p(s_1,\ldots,s_n)\neq 0$.
\end{theorem}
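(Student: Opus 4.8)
The plan is to deduce this (the commonly-used ``coefficient'' form) from the stronger representation form of the Nullstellensatz, and then read off the coefficient of the distinguished monomial. First I would normalise the data: since shrinking the $S_i$ only makes the conclusion harder, I may assume $|S_i| = t_i + 1 =: d_i$ exactly, and I set $g_i(x_i) = \prod_{s \in S_i}(x_i - s)$, a monic polynomial of degree $d_i$ in $x_i$. Arguing by contradiction, suppose $p$ vanishes on all of $S_1 \times \cdots \times S_n$.

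The first step is a division/reduction argument producing $h_1,\dots,h_n \in F[x_1,\dots,x_n]$ with $p = \sum_{i=1}^n h_i g_i$ and $\deg h_i \le t - d_i$. The idea is that on $S_i$ one has $x_i^{d_i} = x_i^{d_i} - g_i(x_i)$, so inside $p$ one may repeatedly replace any power $x_i^{k}$ with $k \ge d_i$ by $x_i^{k-d_i}\bigl(x_i^{d_i} - g_i(x_i)\bigr)$; each such replacement of a monomial $m$ subtracts a monomial multiple of $g_i$ of degree $\deg m - d_i$, the procedure terminates, and it yields $\bar p := p - \sum_i h_i g_i$ which has $x_i$-degree $< d_i$ for every $i$, still vanishes on $S_1 \times \cdots \times S_n$, and has each $h_i$ within the claimed degree bound. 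Next I invoke the standard vanishing lemma: a polynomial whose $x_i$-degree is $< |S_i|$ for every $i$ and which vanishes on $S_1 \times \cdots \times S_n$ is identically zero. This is an induction on $n$, the base case being that a nonzero univariate polynomial of degree $< d$ has fewer than $d$ roots, and the inductive step coming from writing $\bar p = \sum_j q_j(x_1,\dots,x_{n-1})\, x_n^j$, fixing the first $n-1$ coordinates in $S_1 \times \cdots \times S_{n-1}$, and concluding every $q_j$ vanishes there. Thus $\bar p = 0$ and $p = \sum_{i=1}^n h_i g_i$.

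The final step is to compare, on both sides of $p = \sum_{i=1}^n h_i g_i$, the coefficient of the monomial $\prod_{i=1}^n x_i^{t_i}$, which has total degree $t$. On the left it is nonzero by hypothesis. On the right, fix $i$; since $\deg(h_i g_i) \le (t - d_i) + d_i = t$, only the top-degree part contributes, and writing $g_i(x_i) = x_i^{d_i} + (\text{terms of }x_i\text{-degree} < d_i)$, the monomial $\prod_i x_i^{t_i}$ cannot come from the $x_i^{d_i}$ part of $g_i$ (that would require $x_i$-degree $\ge d_i > t_i$), and from the lower-order part of $g_i$ the product has degree at most $(t-d_i) + (d_i - 1) = t-1 < t$. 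Hence every $h_i g_i$ has coefficient $0$ at $\prod_i x_i^{t_i}$, contradicting the left-hand side.

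I expect the reduction in the second paragraph to be the only real obstacle: one must check that the rewriting process terminates and, more importantly, that the accumulated quotients $h_i$ never exceed degree $t - d_i$ — it is precisely this degree control that makes the coefficient bookkeeping in the last step valid. Once the representation $p = \sum_i h_i g_i$ with these degree bounds is established, the vanishing lemma and the coefficient comparison are routine.
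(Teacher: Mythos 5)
Your argument is correct and complete: it is precisely Alon's original proof of the Combinatorial Nullstellensatz (reduction modulo the $g_i$ with the degree bound $\deg h_i\le t-d_i$, the vanishing lemma for polynomials of $x_i$-degree $<|S_i|$, and the comparison of the coefficient of $\prod_i x_i^{t_i}$). Note that the paper does not prove this statement at all --- it is quoted from \cite{Alon} as a known tool --- so there is nothing to compare against; your write-up correctly identifies the one delicate point, namely that the rewriting procedure preserves total degree and hence keeps each quotient $h_i$ within degree $t-d_i$, which is what makes the final coefficient bookkeeping work.
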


The theorem turns out to be extremely
useful in providing a necessary and sufficient condition for a certain graph (or
hypergraph) to be colorable \cite{XB2022}. 
In this paper we would like to introduce some new methods based on the combinatorial spectrum on polynomial rings. We hope that this concept will be potentially useful for graph theorists.
\section{Magic-type of labelings}
Generally speaking, magic-type of labeling of a graph $G=(V,E)$ is a mapping from only $V$ or $E$, or their union $V\cup E$, to a set of labels, which most often is a set of integers or elements of a group. Then the weight of a graph element is typically the sum of labels of the adjacent or incident elements of one or both types. When the weight of all elements is required to be equal, then we speak of a magic-type labeling; when the weights should be all different, then we speak of an anti-magic-type labeling. Probably the best-known problem in this area is the {\em anti-magic conjecture} by Hartsfield and Ringel~\cite{HR}, which claims that the edges of every graph except $K_2$ can be labeled bijectively with integers $1, 2, \dots, |E|$ so that the weight of every vertex is unique. This conjecture is still open.

\subsection{Anti-magic labeling}\label{aml}

For a graph $G=(V,E)$  without any isolated vertex, an \textit{antimagic edge labeling} $\ell\colon E\to \{1,2,\ldots,|E|\}$ is a bijection, such that the \textit{weighted degree}  given by $w(u)=\sum_{\{u,v\}\in E}\ell(\{u,v\})$ is injective. A graph is called \textit{antimagic} if it admits an antimagic labeling.

This labeling was introduced by Hartsfield and Ringel in \cite{HR}, they showed that paths, cycles, complete graphs  ($n\geq 3$) are antimagic and conjectured that all connected graphs besides  are antimagic. Another weaker version of the conjecture is that every regular graph is antimagic except $K_2$. Both conjectures remain unsettled so far. However, in 2004 Alon et al. \cite{ref_AKLRY} showed that the the first conjecture is true for dense graphs. 

Below we show how to express a problem of finding if a labeling is anti-magic in the language of combinatorial spectra.

We start with some definitions. Namely, let $j \in \{1,\dots,|V(G)|\}$, then we define an unweighted graph $S(j)$ as a star with center in the vertex $j$ and set of vertices 
\[
V(S(j))= \{1,\dots,|V(G)|\}.
\]
Recall that for a ring $R$ by $R_n[x]$ we denote the polynomial ring over $R$, where the polynomials are of degree at most $n$. Then we define an $\mathbb{R}_{ {|V(G)|}}[x]$-weighted complete graph
\begin{equation}\label{S}
S[x]=\sum_{j \in \{1,\dots,|V(G)|\} } x^{j-1}\cdot I(S(j)).
\end{equation}
For $j,k \in \{1,\dots,|V(G)|\}$ we  denote by $E(j,k)$ an unweighted graph being an edge between vertices $j$ and $k$ and having the set of vertices
\[
V(E(j,k))=\{1,\dots,|V(G)|\}.
\]
Let now 
\[
E[x]=\sum_{
\begin{subarray}{c}j,k \in \{1,\dots,|V(G)|\} \\ j>k
\end{subarray}
} x^{\psi(j,k)}\cdot I(E(j,k))
\]
be an $\mathbb{R}_{ {|V(G)|}}[x]$-weighted complete graph, where 
$\psi(j,k)$ is the sequence number (starting from 0) in the lexicographic order of the couple $(j,k)$   for $j,k\in \{1,\ldots, |V(G)|\}$ and $j>k$. Thus it is easy to see that
 \[
\psi(j,k)= {{j-1} \choose {2}}+k-1.
\]

\begin{lemma}\label{l1}

Let $G$ be an $\mathbb{N}$-weighted graph, that is not complete,
then $G$ is anti-magic if and only if 
\[
|s(G*I(S(1)))|=|V(G)| \text{ and } s(G*I(E(1,2)))={\{0,1,\dots,|E(G)|\}}.
\]
If $G$ is a complete $\mathbb{N}$-weighted graph then $G$ is anti-magic if and only if 
\[
|s(G*I(S(1)))|=|V(G)| \text{ and } s(G*I(E(1,2)))={\{1,\dots,|E(G)|\}}.
\]

We interpret $G$ as a complete $\mathbb{R}$-weighted graph by adding weight zero to all non-edges of $G$.
\end{lemma}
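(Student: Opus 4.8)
The plan is to strip the two spectra appearing in the statement down to elementary combinatorial data: I claim that $s(G*I(S(1)))$ is exactly the set of weighted vertex-degrees of $G$, while $s(G*I(E(1,2)))$ is exactly the set of pair-weights of $G$ after the completion to an $\mathbb{R}$-weighted complete graph. Granting these two identifications, the lemma is immediate from the definition of an antimagic labeling as recalled above.

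First I would compute $s(G*I(S(1)))$. Fix a bijection $f$ between $V(G)$ and $V(I(S(1)))=\{1,\dots,|V(G)|\}$ (the direction of $f$ is immaterial, since we range over all bijections) and put $v_0=f^{-1}(1)$. By definition of $I(S(1))$, an edge of $I(S(1))$ has weight $1$ if it is incident with the vertex $1$ and weight $0$ otherwise; pulling this back through $f$, in $G*_f I(S(1))$ an edge $e$ retains the weight $\nu_G(e)$ when $v_0\in e$ and is sent to $0$ otherwise. Hence $s(G*_f I(S(1)))=\sum_{e\ni v_0}\nu_G(e)=w(v_0)$, the weighted degree of $v_0$ (the zero-weighted former non-edges contribute nothing, so this matches the weighted degree in the original $G$). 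Letting $f$ run over all bijections, $v_0$ runs over all of $V(G)$, so $s(G*I(S(1)))=\{w(v):v\in V(G)\}$, and therefore $|s(G*I(S(1)))|=|V(G)|$ if and only if $w$ is injective.

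Next I would compute $s(G*I(E(1,2)))$ in the same spirit: in $I(E(1,2))$ the only edge with nonzero weight is $\{1,2\}$, with weight $1$, so in $G*_f I(E(1,2))$ the only surviving edge is $\{f^{-1}(1),f^{-1}(2)\}$, whence $s(G*_f I(E(1,2)))=\nu_G(\{f^{-1}(1),f^{-1}(2)\})$. As $f$ varies, the pair $\{f^{-1}(1),f^{-1}(2)\}$ ranges over all of $\binom{V(G)}{2}$, so $s(G*I(E(1,2)))=\{\nu_G(p):p\in\binom{V(G)}{2}\}$. Now I split into two cases. If $G$ is complete, every pair is an edge, so this set equals $\{\nu_G(e):e\in E(G)\}$; since $E(G)$ and $\{1,\dots,|E(G)|\}$ have the same finite cardinality, this set equals $\{1,\dots,|E(G)|\}$ exactly when $\nu_G$ is a bijection of $E(G)$ onto $\{1,\dots,|E(G)|\}$. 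If $G$ is not complete there is at least one non-edge, which after completion carries weight $0$, so the set is $\{0\}\cup\{\nu_G(e):e\in E(G)\}$, and by the same counting remark it equals $\{0,1,\dots,|E(G)|\}$ exactly when $\nu_G$ restricted to $E(G)$ is a bijection onto $\{1,\dots,|E(G)|\}$.

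Finally I would assemble the pieces: by definition $G$ is antimagic precisely when $\nu_G$ is a bijection $E(G)\to\{1,\dots,|E(G)|\}$ whose induced weighted degree function $w$ is injective, and the two computations above show that these two requirements are equivalent, respectively, to the edge-spectrum condition (in its complete or non-complete form) and to $|s(G*I(S(1)))|=|V(G)|$; conjoining them yields the stated equivalence. I do not expect a genuine obstacle here, as all the content is in unwinding the definitions of $*$, $I(\cdot)$, $S(j)$, $E(j,k)$ and $s(\cdot)$; the points that require care are keeping track of which argument of $*$ the bijection $f$ permutes, remembering that the completion of $G$ with zero weights on non-edges is exactly what introduces the extra value $0$ and forces the case distinction, and making explicit the elementary step that a value-set of size $|E(G)|$ equal to $\{1,\dots,|E(G)|\}$ forces $\nu_G$ to be bijective.
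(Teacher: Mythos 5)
Your proposal is correct and follows essentially the same route as the paper: both reduce $s(G*I(S(1)))$ to the set of weighted degrees and $s(G*I(E(1,2)))$ to the set of pair-weights (with the extra $0$ coming from the zero-completed non-edges in the non-complete case), and then conclude from the definition of an antimagic labeling. Your write-up merely makes explicit two points the paper leaves implicit, namely the role of the varying bijection $f$ and the cardinality argument forcing $\nu_G$ to be injective.
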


\begin{proof}
\[s(G*I(S(1)))=\left\{\sum_{\substack{e\in E(G)\\ v\in e}}\nu_G(e) \middle| v \in V(G)\right\},
\] 
hence the condition
\[
|s(G*I(S(1)))|=|V(G)|
\]
means that all these sums are different.
\[
s(G*I(E(1,2)))=\left\{ \nu_G(e) \middle| e\in E(G)\right\}
\]
hence the second condition means that all colors used in $G$ are exactly 
\[
\{1,2,\dots,|E(G)|\} \]
{if $G$ is a complete graph or
\[
\{0,1,2,\dots,|E(G)|\}
\]
otherwise.

}
\end{proof}

The above lemma we can apply to get the following result:

\begin{theorem}\label{t1}
Let $\mathscr{G}$ be a set of $\mathbb{N}_0$-weighted { complete graphs}, then $\mathscr{G}$ contains an anti-magic graph if and only if in $s(\mathscr{G}*(S[x]+x^{|V(G)|}E[x]))$ exists a 
polynomial $p$ such that all coefficients $coef_j(p)$ 
{for $j\in \{0,\dots,|V(G)|-1 \}$}, 
are different and the rest of coefficients form a {set $A$, such that $A\supseteq  \{1,2,\dots,|E(G)|\}$}. Where $coef_j(p)$ is the $j$th coefficient of $p$.
\end{theorem}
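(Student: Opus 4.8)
The plan is to reduce Theorem~\ref{t1} to Lemma~\ref{l1} by unwinding the definition of the operation $*$ and of the polynomial-weighted graphs $S[x]$ and $E[x]$. First I would fix a graph $G \in \mathscr{G}$ and a bijection $f\colon V(G)\to\{1,\dots,|V(G)|\}$, and compute the weight $\nu_{G *_f (S[x]+x^{|V(G)|}E[x])}$ on each edge. By bilinearity of $*_f$ over the ring $\mathbb{R}_{|V(G)|}[x]$ (and over the ``$+$'' operation on $\mathscr{P}(R\text{-}Graphs_n)$ recalled from \cite{Dz}), it suffices to track separately the contribution of $S[x]=\sum_j x^{j-1}I(S(j))$ and of $x^{|V(G)|}E[x]=\sum_{j>k} x^{|V(G)|+\psi(j,k)}I(E(j,k))$. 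The key observation is that $s\bigl(G *_f x^{j-1}I(S(j))\bigr) = x^{j-1}\cdot\sum_{e\ni f^{-1}(j)}\nu_G(e)$, i.e.\ the coefficient of $x^{j-1}$ in the $S[x]$-part records exactly the weighted degree $w\bigl(f^{-1}(j)\bigr)$; and similarly $s\bigl(G *_f x^{|V(G)|+\psi(j,k)}I(E(j,k))\bigr) = x^{|V(G)|+\psi(j,k)}\cdot\nu_G\bigl(\{f^{-1}(j),f^{-1}(k)\}\bigr)$, so the coefficients of $x^{|V(G)|},x^{|V(G)|+1},\dots$ in the $E[x]$-part enumerate the edge-weights of $G$ (re-indexed by $f$).

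Next I would assemble this into a single polynomial: for fixed $f$, the element $s\bigl(G *_f (S[x]+x^{|V(G)|}E[x])\bigr)\in\mathbb{R}_{|V(G)|}[x]$ has, for $j\in\{0,\dots,|V(G)|-1\}$, the coefficient $coef_j(p) = w\bigl(f^{-1}(j+1)\bigr)$ equal to the weighted degree of the $(j{+}1)$-st vertex under $f$, while the coefficients of higher powers of $x$ are precisely the multiset $\{\nu_G(e) : e\in E(G)\}$ placed in the positions $\psi$ dictates, with zeros elsewhere. Here the crucial point is the \emph{choice of the shift} $x^{|V(G)|}$: because $S[x]$ only uses exponents $0,\dots,|V(G)|-1$ and $E[x]$ is multiplied by $x^{|V(G)|}$, the ``degree block'' and the ``edge-weight block'' occupy disjoint ranges of exponents and do not interfere. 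Then: the $|V(G)|$ low-degree coefficients are pairwise distinct $\iff$ the weighted degrees of $G$ are all distinct $\iff$ (by the first condition in Lemma~\ref{l1}, applied with the bijection $f^{-1}$ in the role of the bijection defining $*$) $|s(G*I(S(1)))|=|V(G)|$; and the remaining coefficients form a set containing $\{1,\dots,|E(G)|\}$ $\iff$ the edge-weights of $G$ form such a set $\iff$ the second condition $s(G*I(E(1,2)))\supseteq\{1,\dots,|E(G)|\}$ holds. Since every bijection $f$ is ranged over in forming $G*(S[x]+x^{|V(G)|}E[x])$, and $\mathscr{G}*(\cdot)=\bigcup_{G\in\mathscr{G}}G*(\cdot)$, the existence of such a $p$ in $s(\mathscr{G}*(S[x]+x^{|V(G)|}E[x]))$ is equivalent to the existence of some $G\in\mathscr{G}$ with some vertex-relabelling making it anti-magic, i.e.\ to $\mathscr{G}$ containing an anti-magic graph.

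I would finish by addressing the mild bookkeeping discrepancy between the complete and non-complete cases: Lemma~\ref{l1} asks for edge-weight set $\{1,\dots,|E(G)|\}$ when $G$ is complete and $\{0,1,\dots,|E(G)|\}$ otherwise (the $0$ coming from the non-edges that get weight $0$ under the ``interpret as complete'' convention). Since the theorem is stated for $\mathscr{G}$ a set of $\mathbb{N}_0$-weighted \emph{complete} graphs but only requires $A\supseteq\{1,2,\dots,|E(G)|\}$ (rather than equality), a weight-$0$ edge is harmless and both cases are covered uniformly by the ``$A$ contains $\{1,\dots,|E(G)|\}$'' condition; I would just remark that in the genuinely complete case $A=\{1,\dots,|E(G)|\}$ exactly, recovering the stronger conclusion of Lemma~\ref{l1}. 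The main obstacle I anticipate is purely notational rather than mathematical: one must be careful that $coef_j$ is indexed so that $j$ runs over $0,\dots,|V(G)|-1$ matching the exponents $x^{j-1}$ in \eqref{S} (an off-by-one between ``vertex $j$'' and ``exponent $j-1$''), and that the degree bound $|V(G)|$ on the polynomial ring $\mathbb{R}_{|V(G)|}[x]$ is large enough to hold all the exponents $|V(G)|+\psi(j,k)$ that appear — which requires reading $\mathbb{R}_{|V(G)|}[x]$ in the statement loosely, or silently enlarging the degree bound to at least $|V(G)|+\binom{|V(G)|}{2}-1$; I would flag this so the reader is not tripped up by it.
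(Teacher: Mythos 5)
Your proposal is correct and follows essentially the same route as the paper: expand $s\bigl(G *_f (S[x]+x^{|V(G)|}E[x])\bigr)$ by linearity, identify the low-degree coefficients with the weighted degrees $s(G*I(S(j)))$ and the shifted block with the edge-weights $s(G*I(E(j,k)))$, and invoke Lemma~\ref{l1} in both directions. Your added remarks — that containment $A\supseteq\{1,\dots,|E(G)|\}$ suffices because the $|E(G)|$ nonzero coefficients then realize that set exactly, and that the degree bound on $\mathbb{R}_{|V(G)|}[x]$ must be read as at least $|V(G)|+\binom{|V(G)|}{2}-1$ — are points the paper glosses over, and are worth keeping.
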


\begin{proof}
Let there be a polynomial 
\[
p \in s(\mathscr{G}*(S[x]+x^{|V(G)|}E[x]))
\] 
which satisfies these conditions. Then there exist a graph $G \in \mathscr{G}$ and a pseudoordering $g$ such that 
\[
p = s(G*_g(S[x]+x^{|V(G)|}E[x])). 
\] 
We will show now that $G$ is an antimagic graph. Note that
\[
coef_j(p) = \begin{cases}
s(G*_g I(S(j+1)))  &\text{for } j\in \{0,\dots, n-1\}\\
s(G*_g I(E(\psi^{-1}(j-n)))  &\text{for } j\in \{n,\dots,{n \choose 2}+n-1\}\\
0 &\text{otherwise},
\end{cases}
\]
where $n=|V(G)|$ and $\psi^{-1}$ is the inverse of the map, $\psi $ defined before.
{Note that there exist pseudoorderings $g' : V(G) \rightarrow V(I(S(1)))$ and $g'': V(G) \rightarrow V(I(E(1,2))) $ such that:
 \[g'(g^{-1}(j+1))=1, \;\;\; g''(g^{-1}(\psi^{-1}(j-n)))=\{1,2\}.\]
 Therefore \[
s(G*_g I(S(j+1))) = s(G*_{g'} I(S(1)),
\]
and 
\[
s(G*_g I(E(\psi^{-1}(j-n))) = s(G*_{g''} I(E(1,2))).
\]}

Now we can say that by the first condition, we get that 
\[
|s(G*I(S(1)))|=|V(G)|
\]
and from the second 
\[
s(G*I(E(1,2))){\supseteq}\{1,2,\dots,|E(G)|\}.
\]
Hence $G$ is antimagic by  Lemma \ref{l1}.
For another implication, we can define 
\[
p = s(G*_g(S[x]+x^{|V(G)|}E[x])) 
\] 
and analogously check the conditions.
\end{proof}
{
 Let $\mathscr{G} \in \mathscr{P}(R-Graphs_n)$. If there exists $n \in \mathbb{N}$ such that for all $n\leq k\in \mathbb{N}$
\[
\mathscr{G}^{*k}=\mathscr{G}^{*n},
\]
then as in \cite{Dz} we will denote 
\[
\mathscr{G}^{*\infty}=\mathscr{G}^{*n}.
\]
This is a standard notation from the theory of semigroups.
For $G$ being in $R-Graphs_n$ let $
G^{*\infty}=\{G\}^{*\infty}$.

}
We will need now the following lemma.
\begin{lemma}[\cite{Dz}]\label{ldz}
\[I(G)*\left(\sum_{i=1}^{k-1} \left(I(K_{|V(G)|}\setminus e)^{*\infty} \cup \left\{I(K_{|V(G)|})\right\}\right)+ I(K_{|V(G)|}) \right)\]
is equal to a set of all $k$-colorings (possibly non-proper) of the graph $G$ where we add zeros to non-edges of $G$. 
\end{lemma}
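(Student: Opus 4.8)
The plan is to evaluate the nested expression from the inside out, identifying each intermediate member of $\mathscr{P}(R-Graphs_n)$ explicitly; write $n=|V(G)|$ and let $e_0$ denote the deleted edge, so $I(K_n\setminus e_0)$ is the complete $R$-weighted graph carrying weight $0$ on $e_0$ and weight $1$ on every other edge. First I would pin down $\mathscr{A}:=I(K_n\setminus e_0)^{*\infty}\cup\{I(K_n)\}$. Since a bijection $f$ of $V(G)$ acts on edges, $I(K_n\setminus e_0)*_f I(K_n\setminus e_0)$ carries weight $1$ on every edge except $e_0$ and $f^{-1}(e_0)$, where it is $0$; iterating, $\{I(K_n\setminus e_0)\}^{*k}$ is the set of all $\{0,1\}$-weighted complete graphs whose zero-edge set has the form $\{e_0\}\cup S$ with $|S|\le k-1$. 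This stabilizes once $k\ge\binom{n}{2}$, so $I(K_n\setminus e_0)^{*\infty}$ is exactly the set of $\{0,1\}$-weighted complete graphs with $\nu(e_0)=0$, and therefore $\mathscr{A}$ is precisely the set of all $\{0,1\}$-weighted complete graphs $H$ with $\nu_H(e_0)=\min_{e}\nu_H(e)$.

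Next I would treat the sum. Using the operation $+$ on $\mathscr{P}(R-Graphs_n)$ and induction on $m$, I would show that $\sum_{i=1}^{m}\mathscr{A}$ is exactly the set of $R$-weighted complete graphs whose weights lie in $\{0,1,\dots,m\}$ and which attain their minimum weight at $e_0$. The inclusion ``$\subseteq$'' is immediate from $\nu_{H+H'}(e)=\nu_H(e)+\nu_{H'}(e)\ge\nu_H(e_0)+\nu_{H'}(e_0)$; for ``$\supseteq$'', given such a graph $K$ with $\nu_K(e_0)=t$, if $t\le m$ I peel off the $\{0,1\}$-graph supported exactly on the edges of weight $m+1$ (which lies in $\mathscr{A}$, since its weight at $e_0$ is $0$), and if $t=m+1$ then $K$ is constant and splits as ``constant $m$'' plus $I(K_n)$. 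Adding $I(K_n)$, which raises every weight by $1$, then identifies $\mathscr{B}:=\sum_{i=1}^{k-1}\mathscr{A}+I(K_n)$ with the set of all $\{1,\dots,k\}$-weighted complete graphs whose minimum weight is attained at $e_0$.

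Finally I would compute $I(G)*\mathscr{B}$. For $B\in\mathscr{B}$ and a bijection $f$, the graph $I(G)*_f B$ has weight $\nu_{I(G)}(e)\cdot\nu_B(f(e))$, which vanishes off $E(G)$ and lies in $\{1,\dots,k\}$ on $E(G)$, so every element of $I(G)*\mathscr{B}$ is a (possibly non-proper) $k$-coloring of the edges of $G$ with zeros added on the non-edges. Conversely, given such a coloring $c$, I must produce $B\in\mathscr{B}$ and a bijection $f$ with $\nu_B(f(e))=c(e)$ for all $e\in E(G)$. If $G$ is not complete, I choose $f$ carrying some non-edge of $G$ onto $e_0$; then $e_0\notin f(E(G))$, so I may put $\nu_B(e_0)=1$, let $\nu_B$ be determined by $c$ on $f(E(G))$ and equal to $1$ elsewhere, and this $\nu_B$ is automatically minimized at $e_0$, so $B\in\mathscr{B}$. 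If $G$ is complete, I instead choose $f$ carrying an edge of minimum $c$-color onto $e_0$; then $\nu_B:=c\circ f^{-1}$ is forced and its minimum is attained at $e_0$ by construction. In either case $I(G)*_f B=c$, which closes both inclusions and proves the lemma.

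I expect the main obstacle to be this last step: the family $\mathscr{B}$ is not symmetric under relabeling of the edges --- it ``remembers'' that $e_0$ must carry the minimum weight --- so one has to exploit the freedom in the reordering bijection $f$ to absorb that constraint, and the argument genuinely bifurcates according to whether $G$ is complete. A lesser technical point is verifying the stabilization $\{I(K_n\setminus e_0)\}^{*k}=\{I(K_n\setminus e_0)\}^{*\binom{n}{2}}$ for all $k\ge\binom{n}{2}$, which is what legitimizes the $*\infty$ notation used in the first step.
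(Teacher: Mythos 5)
Your proposal is correct. Note, however, that the paper itself gives no proof of this lemma --- it is imported verbatim from \cite{Dz} by citation --- so there is no in-paper argument to compare yours against. On its own terms your argument is sound: the identification of $I(K_n\setminus e_0)^{*\infty}$ with the $\{0,1\}$-weighted complete graphs vanishing at $e_0$ is right (the zero set of an iterated $*$-product is $\{e_0\}$ together with arbitrarily placed preimages of $e_0$, and it stabilizes once $k\geq\binom{n}{2}$), and the inductive characterization of $\sum_{i=1}^{m}\mathscr{A}$ as the $\{0,\dots,m\}$-weighted complete graphs attaining their minimum at $e_0$ goes through --- your peeling of the indicator of the top weight class preserves both the range and the location of the minimum, and the constant case is correctly split off via $I(K_n)$. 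You also correctly isolate the one genuinely delicate point: the intermediate family is not edge-transitive because it remembers that $e_0$ carries the minimum weight, and this constraint must be absorbed by the choice of the vertex bijection $f$, which forces the bifurcation between $G$ complete (send a minimum-color edge to $e_0$) and $G$ incomplete (send a non-edge to $e_0$, and fill $B$ with weight $1$ off $f(E(G))$). A skipped but harmless detail is the associativity of $*$ at the level of sets, which is what makes $\mathscr{A}^{*k}$ unambiguous; it follows from $\nu_{(H*_fG)*_gK}(e)=\nu_H(e)\nu_G(f(e))\nu_K(g(e))$ after reindexing the bijections.
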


Let us denote 
\[
C(1,\dots,k)_n=\sum_{i=1}^{k-1} \left(I(K_{n}\setminus e)^{*\infty} \cup \left\{I(K_n)\right\}\right)+ I(K_n) .
\]
\begin{remark}\label{irr}
In this notation is the formula from the last lemma equal to 
\[
I(G)*C(1,\dots,k)_{|V(G)|}.
\]
\end{remark}

{

\begin{theorem}
Let $G$ be an unweighted graph then 
$G$ is antimagic if and only if there exists a polynomial
\[
p\in s\left(I(G)*C(1,\dots,|E(G)|)_{|V(G)|}*\left(S[x]+x^{|V(G)|}E[x]\right)\right)
\] 
such that all coefficients $coef_j(p)$ 
for $j\in \{0,\dots,|V(G)|-1 \}$, 
are different and the rest of the coefficients form a
{set $A$, such that $A\supseteq  \{1,2,\dots,|E(G)|\}$}.
\end{theorem}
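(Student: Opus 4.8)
The plan is to derive this statement as a direct application of Theorem~\ref{t1} to a well-chosen family $\mathscr{G}$. Concretely, I would set
\[
\mathscr{G} = I(G)*C(1,\dots,|E(G)|)_{|V(G)|}.
\]
By Lemma~\ref{ldz} together with Remark~\ref{irr}, $\mathscr{G}$ is precisely the set of all $|E(G)|$-colorings of $G$ (not necessarily proper, not necessarily surjective onto the colour set), where in addition every non-edge of $G$ is assigned weight $0$; in particular every member of $\mathscr{G}$ is an $\mathbb{N}_0$-weighted complete graph on $|V(G)|$ vertices, so Theorem~\ref{t1} is applicable to $\mathscr{G}$. Since the set occurring in the present statement is literally $s\!\left(\mathscr{G}*(S[x]+x^{|V(G)|}E[x])\right)$, Theorem~\ref{t1} already gives the polynomial characterisation of ``$\mathscr{G}$ contains an anti-magic graph'', and it only remains to check that $\mathscr{G}$ contains an anti-magic graph (in the sense of Lemma~\ref{l1}) if and only if the unweighted graph $G$ is itself anti-magic.

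For the easy direction I would argue as follows. If $G$ is anti-magic, fix an anti-magic bijection $\ell\colon E(G)\to\{1,\dots,|E(G)|\}$; extending $\ell$ by the value $0$ on all non-edges produces an $|E(G)|$-coloring of $G$, hence an element $G'\in\mathscr{G}$. Because $\ell$ is a bijection, the values used on the edges of $G$ are exactly $\{1,\dots,|E(G)|\}$ (plus $0$ on the non-edges when $G$ is not complete), and because the weighted degrees of $\ell$ are pairwise distinct, $G'$ satisfies both conditions of Lemma~\ref{l1}. Thus $\mathscr{G}$ contains an anti-magic graph.

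For the converse, suppose $G'\in\mathscr{G}$ is anti-magic. As a member of $\mathscr{G}$, $G'$ is an $|E(G)|$-coloring of $G$ extended by $0$ on the non-edges, so every edge of $G$ carries a value in $\{1,\dots,|E(G)|\}$. Lemma~\ref{l1} says the weighted degrees of $G'$ are pairwise distinct and that the set of values occurring is $\{0,1,\dots,|E(G)|\}$ (respectively $\{1,\dots,|E(G)|\}$ if $G$ is complete), the $0$'s coming only from non-edges; since $G$ has exactly $|E(G)|$ edges, the restriction of the colouring to $E(G)$ must be a bijection onto $\{1,\dots,|E(G)|\}$ with pairwise distinct vertex weights, i.e. an anti-magic labelling of $G$. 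Combining the two directions with Theorem~\ref{t1} yields the stated equivalence.

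The two translations in the previous two paragraphs between ``an $|E(G)|$-coloring that is anti-magic as a weighted complete graph'' and ``an anti-magic labelling of $G$'' are routine. The only point that needs care — and the place I would expect the main friction — is the index bookkeeping across the three ingredients: one must verify that the weight $0$ contributed by the non-edges of $G$ ends up among the ``rest of the coefficients'' (indices $\ge |V(G)|$, i.e. in the block coming from $x^{|V(G)|}E[x]$) and never among $coef_0,\dots,coef_{|V(G)|-1}$, so that the second condition only forces $A\supseteq\{1,\dots,|E(G)|\}$ rather than equality. This is exactly the phenomenon already reflected in the statement and in the proof of Theorem~\ref{t1}, so I anticipate no genuine obstacle beyond making this consistency check explicit; the substance of the result is carried entirely by Lemma~\ref{ldz} and Theorem~\ref{t1}.
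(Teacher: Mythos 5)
Your proposal is correct and follows essentially the same route as the paper: apply Theorem~\ref{t1} to $\mathscr{G}=I(G)*C(1,\dots,|E(G)|)_{|V(G)|}$, identify this family with the set of all $|E(G)|$-colorings of $G$ via Lemma~\ref{ldz} and Remark~\ref{irr}, and observe that such a family contains an anti-magic graph exactly when $G$ admits an anti-magic labelling. The only difference is that you spell out the translation between anti-magic weighted complete graphs and anti-magic labellings more explicitly than the paper does, which the paper compresses into the remark that any anti-magic labelling is an $|E(G)|$-coloring.
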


\begin{proof}
By Theorem \ref{t1} the existence of this polynomial is equivalent to the fact that the set \[
I(G)*C(1,\dots,|E(G)|)_{|V(G)|}
\]
contains an anti-magic graph. By Remark \ref{irr} and Lemma \ref{ldz} this set is equal to the set of all $|E(G)|$-colorings of the graph $G$. 
Hence it is equivalent to the fact that there exists an anti-magic $|E(G)|$-coloring of the graph $G$ and because any anti-magic coloring is $|E(G)|$-coloring we will get the statement of the theorem.
\end{proof}

}

\subsection{Irregular labeling}
It is easy to check, by the Pigeonhole Principle, that in any simple graph $G$ there exist two vertices of the same degree. The situation changes if we consider an edge labeling $\ell\colon E(G)\rightarrow \{1, \ldots, k\}$ (note that the labels do not have to be distinct) and as before calculate {weighted degree} of every vertex $v$ as the sum of labels of all the edges incident to $v$. The labeling $\ell$ is called \textit{irregular} if the weighted degrees of all the vertices are unique (so it is an anti-magic type labeling). The smallest value of $k$ that allows some irregular labeling is called the \textit{irregularity strength of $G$} and is denoted by $s(G)$, but in this article, we will use the notation $\bar{s}(G)$ because it coincides with another our notation. The problem of finding $\bar{s}(G)$ was introduced by Chartrand et al. in \cite{ref_ChaJacLehOelRuiSab1} and investigated by numerous authors  \cite{ref_AigTri2,ref_AmaTog,ref_KalKarPfe1}.

Because irregularity is one part of antimagicness, we can get an equivalence of Lemma \ref{l1} for irregularity.

\begin{lemma}
Let $G$ be a $\mathbb{N}$-weighted graph,
then $G$ is irregular labeled graph if and only if 
\[
|s(G*I(S(1)))|=|V(G)|.
\]
We interpret $G$ as a complete $\mathbb{R}$-weighted graph by adding weight zero to all non-edges of $G$.
\end{lemma}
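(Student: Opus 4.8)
The plan is to mirror the proof of Lemma~\ref{l1}, retaining only its first condition, since irregularity of a labeling is precisely the ``injectivity of weighted degrees'' half of antimagicness. So the whole content is to identify $s(G*I(S(1)))$ with the set of weighted degrees of $G$.

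First I would unwind the definitions of $*$, $s$, $I(\cdot)$ and the star $S(1)$. For a bijection $f\colon V(G)\to V(I(S(1)))$, the graph $G*_f I(S(1))$ has the vertex and edge set of $G$ (viewed as a complete graph with zeros on non-edges) with weights $\nu_G(e)\cdot\nu_{I(S(1))}(f(e))$. Since $\nu_{I(S(1))}$ is the indicator of the star centered at the vertex $1$, this product equals $\nu_G(e)$ when $f(e)$ is incident to $1$ and $0$ otherwise; summing over $e\in E(G)$ gives
\[
s\big(G*_f I(S(1))\big)=\sum_{\substack{e\in E(G)\\ f^{-1}(1)\in e}}\nu_G(e).
\]
As $f$ ranges over all bijections, $f^{-1}(1)$ ranges over all of $V(G)$, whence
\[
s\big(G*I(S(1))\big)=\left\{\sum_{\substack{e\in E(G)\\ v\in e}}\nu_G(e)\ \middle|\ v\in V(G)\right\}.
\]
Here I would also note that the weights $0$ assigned to the non-edges of $G$ contribute nothing to these sums, so passing to the complete $\mathbb{R}$-weighted graph in the statement changes nothing; this is exactly the computation already carried out in the proof of Lemma~\ref{l1}.

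With this identification the equivalence is immediate: the set $\left\{\sum_{e\in E(G),\,v\in e}\nu_G(e)\ \middle|\ v\in V(G)\right\}$ has exactly $|V(G)|$ elements if and only if the map $v\mapsto\sum_{e\in E(G),\,v\in e}\nu_G(e)$ is injective, i.e.\ all weighted degrees are distinct, which is by definition what it means for $G$ (with its labeling) to be irregular. For one direction I would take $|s(G*I(S(1)))|=|V(G)|$, read off injectivity of the weighted-degree function, and conclude irregularity; the other direction simply reverses this reasoning. I do not expect a genuine obstacle: the argument is a direct unfolding of the definitions, and the only point needing a sentence of care is the bookkeeping that $s(G*_f I(S(1)))$ isolates exactly the edges incident to the single vertex $f^{-1}(1)$ and that letting $f$ vary realizes every vertex of $G$ — both already established in the proof of Lemma~\ref{l1}.
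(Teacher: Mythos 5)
Your proposal is correct and follows the same route as the paper, which simply observes that the identification $s(G*I(S(1)))=\left\{\sum_{e\in E(G),\,v\in e}\nu_G(e)\ \middle|\ v\in V(G)\right\}$ from the proof of Lemma~\ref{l1} already gives the result. Your write-up just makes that computation explicit.
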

\begin{proof}
The proof follows from the proof of Lemma \ref{l1}.
\end{proof}

\begin{lemma} \label{irl}
Let $\mathscr{G}$ be a set of $\mathbb{N}_0$-weighted complete graphs, then $\mathscr{G}$ contains an irregular labeled graph if and only if in $s(\mathscr{G}*S[x])$ exists a polynomial $p$ such that all coefficients $coef_j(p)$ are different. Where $S[x]$ is defined in \eqref{S}.
\end{lemma}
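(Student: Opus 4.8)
The plan is to specialise the argument of Theorem~\ref{t1} to its star part: drop the edge term $x^{|V(G)|}E[x]$, keep only $S[x]$, and then invoke the irregularity version of Lemma~\ref{l1} proved just above. Put $n=|V(G)|$.

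\emph{Identifying the coefficients.} Fix $G\in\mathscr{G}$ and a bijection $g$, and set $p=s(G*_g S[x])$. By the same computation as in the proof of Theorem~\ref{t1}, but with the edge term absent — distributivity of $*_g$ over the addition of $R$-weighted graphs, together with additivity of $s$, applied to \eqref{S} — we get
\[
p=\sum_{j=0}^{n-1}x^{j}\,s\bigl(G*_g I(S(j+1))\bigr),
\]
so $p$ has degree at most $n-1$, has exactly the $n$ coefficients $coef_0(p),\dots,coef_{n-1}(p)$, and $coef_j(p)=s\bigl(G*_g I(S(j+1))\bigr)$ for $j\in\{0,\dots,n-1\}$. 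Exactly as in Theorem~\ref{t1}, for each such $j$ there is a bijection $g'$ with $g'(g^{-1}(j+1))=1$, whence $s\bigl(G*_g I(S(j+1))\bigr)=s\bigl(G*_{g'}I(S(1))\bigr)$, and this number is the $\nu_G$-weighted degree of the vertex $g^{-1}(j+1)$. As $j$ runs through $\{0,\dots,n-1\}$ the vertex $g^{-1}(j+1)$ runs through all of $V(G)$, so the tuple $\bigl(coef_0(p),\dots,coef_{n-1}(p)\bigr)$ is precisely the list of weighted degrees of $G$ and $\{\,coef_j(p):0\le j\le n-1\,\}=s\bigl(G*I(S(1))\bigr)$.

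\emph{The equivalence.} Suppose $\mathscr{G}$ contains an irregular labeled graph $G$. Pick any bijection $g$ and let $p=s(G*_g S[x])\in s(\mathscr{G}*S[x])$; by the previous paragraph the coefficients of $p$ are the weighted degrees of $G$, which are pairwise distinct because $G$ is irregular (irregularity lemma above), so $p$ is as required. Conversely, suppose $p\in s(\mathscr{G}*S[x])$ has all coefficients distinct; write $p=s(G*_g S[x])$ with $G\in\mathscr{G}$ and $g$ a bijection. Then the weighted degrees of $G$, being exactly the coefficients of $p$, are pairwise distinct, that is $|s(G*I(S(1)))|=|V(G)|$, so $G$ is an irregular labeled graph by the same lemma; hence $\mathscr{G}$ contains one.

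\emph{Where the work is.} The argument is routine once the coefficient identification $coef_j(p)=s(G*_g I(S(j+1)))$ is in place, and this is the only step that needs care: one must check, using $\nu_{S[x]}(\{a,b\})=x^{a-1}+x^{b-1}$ and the bijectivity of $g$, that the coefficient of $x^{j}$ in $s(G*_g S[x])$ singles out the unique vertex $g^{-1}(j+1)$ and sums $\nu_G$ over exactly the edges incident to it, with no edge counted twice and none omitted — this is the star part of the displayed coefficient formula in Theorem~\ref{t1}. Completeness of $G$ makes the weighted degree a genuine sum over all of $V(G)\setminus\{v\}$ and keeps the relabeling step clean; and allowing weight $0$ (i.e.\ $\mathbb{N}_0$- rather than $\mathbb{N}$-weighted graphs) is harmless here, since irregularity restricts only the weighted degrees and not the individual edge weights.
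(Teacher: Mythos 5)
Your proof is correct and is exactly the specialization the paper intends: its own proof of this lemma is the one-line remark that it ``follows from the proof of Theorem~\ref{t1}'', and you have simply carried out that specialization (dropping the $x^{|V(G)|}E[x]$ term, identifying $coef_j(p)$ with the weighted degree of $g^{-1}(j+1)$, and invoking the irregularity analogue of Lemma~\ref{l1}). No substantive difference in approach.
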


\begin{proof}
The proof follows from the proof of Theorem \ref{t1}.
\end{proof}

\begin{theorem}
Let $G$ be an unweighted graph and $k\in \mathbb{N}$ then 
$k<\bar{s}(G)$ if and only if there exist no polynomial
\[
p\in s\left(I(G)*C(1,\dots,k)_{|V(G)|}*S[x]\right)
\]
such that all coefficients $coef_j(p)$ are different, where
$C(1,\dots,k)$ is the combinatorial spectra defined in last subsection.
\end{theorem}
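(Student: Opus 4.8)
The plan is to prove the contrapositive and reduce everything to the machinery already assembled, namely Lemma~\ref{irl}, Remark~\ref{irr}, and Lemma~\ref{ldz}. Observe first that, by the definition of the irregularity strength, $k<\bar{s}(G)$ is exactly the statement that $G$ admits no irregular edge labeling $\ell\colon E(G)\to\{1,\dots,k\}$; equivalently, $G$ has no irregular $k$-coloring. Here ``labeling into $\{1,\dots,k\}$'' and ``$k$-coloring'' are the same notion, since the labels need not be distinct, and if $\bar s(G)\le k$ then the irregular labeling realizing $\bar s(G)$ uses only labels from $\{1,\dots,k\}$, so the required monotonicity in $k$ is automatic.

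Next I would identify the set $\mathscr{G}:=I(G)*C(1,\dots,k)_{|V(G)|}$. By Remark~\ref{irr} this is the object appearing in Lemma~\ref{ldz}, and that lemma tells us it equals the set of all (possibly non-proper) $k$-colorings of $G$, viewed as $\mathbb{N}_0$-weighted complete graphs by placing weight $0$ on every non-edge of $G$. Hence ``$\mathscr{G}$ contains an irregular labeled graph'' is literally the same statement as ``$G$ has an irregular $k$-coloring.''

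Then I would apply Lemma~\ref{irl} to this $\mathscr{G}$, which is a set of $\mathbb{N}_0$-weighted complete graphs as that lemma requires: $\mathscr{G}$ contains an irregular labeled graph if and only if there exists a polynomial $p\in s(\mathscr{G}*S[x])=s\bigl(I(G)*C(1,\dots,k)_{|V(G)|}*S[x]\bigr)$ all of whose coefficients $coef_j(p)$ are different. Chaining the three equivalences yields: $k<\bar s(G)$ $\iff$ $G$ has no irregular $k$-coloring $\iff$ $\mathscr{G}$ contains no irregular labeled graph $\iff$ no such polynomial $p$ exists, which is exactly the claim.

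The only point requiring genuine care — and the place where I expect scrutiny — is the translation step: verifying that $C(1,\dots,k)_{|V(G)|}$ really produces \emph{all} $k$-colorings (rather than, say, colorings with at most $k-1$ colors, or only proper ones), and that the ``zeros on non-edges'' convention used in Lemma~\ref{ldz} matches the convention under which $s(G*I(S(1)))$ is computed inside Lemma~\ref{irl}. Once these conventions are confirmed to agree, the proof is a purely formal concatenation of the cited results, involving no computation.
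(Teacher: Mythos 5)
Your proof is correct and follows essentially the same route as the paper's own (two-line) argument: unfold the definition of $\bar{s}(G)$, identify $I(G)*C(1,\dots,k)_{|V(G)|}$ with the set of all $k$-colorings of $G$ via Remark~\ref{irr} and Lemma~\ref{ldz}, and conclude with Lemma~\ref{irl}. Your version is merely more explicit about the monotonicity in $k$ and the weight-zero convention on non-edges, both of which check out.
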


\begin{proof}
Let $k<\bar{s}(G)$, by the definition of $\bar{s}(G)$ there 
exists no irregular $k$-ordering of $G$ now by Lemmas \ref{irl} and \ref{irr} we will get that there exists no polynomial $p$ in given combinatorial spectra such that all coefficients $coef_j(p)$ are different.

The opposite direction is analogous.
\end{proof}

\subsection{1-2-3 conjecture}
The concept of coloring the vertices with the sums of edge labels was introduced by Karo\'nski, {\L}uczak and Thomason (\cite{ref_KarLucTho}). The authors posed the following question. Given a graph $G$ with no component of order less than $3$, what is the minimum $k$ such that there exists a vertex-coloring $k$-edge-labeling? We will say that this coloring is \textit{local irregular labeling}. As there are some analogies with ordinary proper graph coloring,   we will call this minimum value of $k$ the \textit{sum chromatic number} and denote it by $\chi^\Sigma(G)$. 

The well-known 1-2-3 Conjecture of Karo\'nski, {\L}uczak and Thomason asserts
that $\chi^\Sigma(G)\leq 3$ for every graph $G$ with no component of order less than $3$. The first constant bound was proved by Addario-Berry et al. in \cite{ref_AddDalMcDReeTho} ($\chi^\Sigma(G)\leq 30$) and then improved by Addario-Berry et al. in \cite{ref_AddDalRee} ($\chi^\Sigma(G)\leq 16$), Wan and Yu in \cite{ref_WanYu} ($\chi^\Sigma(G)\leq 13$) Kalkowski, Karo\'nski and Pfender in \cite{ref_KalKarPfe2} ($\chi^\Sigma(G)\leq 5$),  and finally by Keusch in \cite{Keusch} ($\chi^\Sigma(G)\leq4$).  

\begin{definition}\label{defM[x]}
Let $j< k \in \{1,\dots,|V(G)|\}$, then we define a $\mathbb{C}$-weighted complete graph $J(j,k)$ on the set of vertices $V(J(j,k))=\{1,\dots,|V(G)|\}$ with weight function
\begin{align*}
\nu_{J(j,k)}(v,u)=\begin{cases}  \phantom{-}i  &\text{if } |\{j,k\} \cap \{v,u\}|=2 \\
\phantom{-}1 &\text{if } \{j,k\} \cap \{v,u\}=\{j\} \\
-1 &\text{if } \{j,k\} \cap \{v,u\}=\{k\} \\
\phantom{-}0 &\text{otherwise}.
\end{cases}
\end{align*}
Then we define a $\mathbb{C}[x]_{\leq {{|V(G)|}\choose{2}}-1}$-weighted complete graph
\[
M[x]=\sum_{
\begin{subarray}{c}j,k \in \{0,\dots,|V(G)|-1\} \\ j<k
\end{subarray}
} x^{\psi(j,k)}\cdot J(j,k),
\]
where $\psi(j,k) $ is the sequence number in lexicographic order {defined in the last subsection.}
\end{definition}

\begin{lemma}
Let $G$ be an $\mathbb{N}$-weighted graph,
then $G$ has a {local irregular labeling} if and only if $s(G*J(1,2))$ does not contain a {nonzero} purely imaginary number.
We interpret $G$  as a complete $\mathbb{C}$-weighted graph by adding weight zero to all non-edges of $G$.\label{lem:ir1}
\end{lemma}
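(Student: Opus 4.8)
The plan is to compute $s(G*_f J(1,2))$ explicitly for an arbitrary bijection $f\colon V(G)\to V(J(1,2))=\{1,\dots,|V(G)|\}$ and then read off when the resulting complex number can be nonzero and purely imaginary. Write $n=|V(G)|$, set $a=f^{-1}(1)$ and $b=f^{-1}(2)$, and let $w(v)=\sum_{u\neq v}\nu_G(\{v,u\})$ denote the weighted degree of $v$, the sum being taken over the whole vertex set of the completed graph, so that non-edges contribute $0$ and edges contribute their (positive) weight.

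First I would partition the edges of the complete graph on $V(G)$ according to Definition~\ref{defM[x]}: under $f$, the edge $\{a,b\}$ receives the factor $i$, an edge meeting $\{a,b\}$ exactly in $a$ receives $1$, an edge meeting $\{a,b\}$ exactly in $b$ receives $-1$, and every other edge receives $0$. Summing $\nu_G(e)\cdot\nu_{J(1,2)}(f(e))$ over all $e$ and using $\sum_{e\ni a,\,e\not\ni b}\nu_G(e)=w(a)-\nu_G(\{a,b\})$ together with the analogous identity for $b$, the two copies of $\nu_G(\{a,b\})$ cancel and one gets
\[
s(G*_f J(1,2))=\bigl(w(a)-w(b)\bigr)+i\,\nu_G(\{a,b\}).
\]
As $f$ runs over all bijections, the ordered pair $(a,b)$ runs over all pairs of distinct vertices of $G$ and the remaining values of $f$ are irrelevant, so
\[
s(G*J(1,2))=\bigl\{\,(w(a)-w(b))+i\,\nu_G(\{a,b\})\;:\;a,b\in V(G),\ a\neq b\,\bigr\}.
\]
Since $w(a)$, $w(b)$ and $\nu_G(\{a,b\})$ are real, this element is purely imaginary iff $w(a)=w(b)$, and it is moreover nonzero iff $\nu_G(\{a,b\})\neq 0$, i.e.\ iff $\{a,b\}\in E(G)$.

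Hence $s(G*J(1,2))$ contains a nonzero purely imaginary number exactly when $G$ has an edge $\{a,b\}$ whose endpoints carry equal weighted degrees, which is precisely the statement that the weighting $\nu_G$ fails to be a vertex-colouring (local irregular) edge-labeling of $G$; taking the contrapositive yields the lemma. The argument is elementary, and the only step needing care is the bookkeeping forced by the ``complete with zeros'' convention --- making sure $w(v)$ is taken over the full vertex set and that the factors $1$, $-1$, $i$, $0$ are attached to the correct classes of edges (those incident to $a$ only, to $b$ only, to both, and to neither); once that accounting is set up the cancellation giving $w(a)-w(b)+i\,\nu_G(\{a,b\})$ is immediate.
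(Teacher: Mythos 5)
Your proof is correct and takes essentially the same route as the paper's: both reduce to the identity $s(G*_f J(1,2))=(w(a)-w(b))+i\,\nu_G(\{a,b\})$, the paper writing the real part as a difference of sums over $N_G(v)\setminus\{u\}$ and $N_G(u)\setminus\{v\}$ before restoring the shared edge, which is exactly the cancellation you carry out directly. Your version with full weighted degrees is marginally cleaner and also sidesteps the paper's small overstatement that the nonzero purely imaginary value must be exactly $i$ (it is $i\,\nu_G(\{a,b\})$ for an $\mathbb{N}$-weight, which is all that is needed).
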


\begin{proof}
Let $s(G*J(1,2))$ contain a nonzero purely imaginary number, by the definition of $J(1,2)$ it needs to be $i$. Let $f$ be a pseudoordering satisfying
\[
s(G*_f J(1,2))=i,
\]
let us rewrite the $s(G*_f J(1,2))$ as follows and denote $f^{-1}(1)=v$ and $f^{-1}(2)=u$.
\[
s(G*_f J(1,2))=
\sum_{w\in N_G(v)\setminus \{u\}} \nu_G(\{v,w\})
-\sum_{w\in N_G(u)\setminus \{v\}} \nu_G(\{u,w\})
+i \cdot \nu_G(\{v,u\})
\]
where $N_G(v)$ is set of neighbors of $v$ in the graph $G$.
This means that there is an edge between vertices $v$ and $u$ and 
\[
\sum_{w\in N_G(v)\setminus \{u\}} \nu_G(\{v,w\})
=\sum_{w\in N_G(u)\setminus \{v\}} \nu_G(\{u,w\}),
\]
hence 
\[
\sum_{w\in N_G(v)} \nu_G(\{v,w\})
=\sum_{w\in N_G(u)} \nu_G(\{u,w\}).
\]
Finally, this says that $G$ is not locally irregular.

Let now $G$ be not locally irregular. This means that there exist vertices $v,u\in V(G)$ such that $\{v,u\}\in E(G)$ and 
\[
\sum_{w\in N_G(v)} \nu_G(\{v,w\})
=\sum_{w\in N_G(u)} \nu_G(\{u,w\}).
\]
If we now define a pseudoordering  $f:V(G)\rightarrow J(1,2)$ by $f(v)=1,f(u)=2$, for the rest of vertices arbitrary, we will get 
\[
s(G*_f J(1,2))=i,
\]
similarly as in the opposite direction.
\end{proof}

\begin{lemma}\label{123l2}
Let $\mathscr{G}$ be a set of $\mathbb{N}_0$-weighted graphs, then $\mathscr{G}$ contains a local irregular labeled graph if and only if there exists in $s(\mathscr{G}*M[x])$ a polynomial $p$ such that all coefficients {of $p$} are not {nonzero} purely imaginary numbers.
\end{lemma}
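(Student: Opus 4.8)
The plan is to transcribe the proof of Theorem~\ref{t1} almost verbatim, with Lemma~\ref{lem:ir1} in the role of Lemma~\ref{l1} and with the new coefficient condition replacing the two old ones. First I would unpack the spectrum. Fix $G\in\mathscr{G}$, write $n=|V(G)|$, take any pseudoordering $g$ and put $p=s(G*_g M[x])$. Since $M[x]=\sum_{j<k}x^{\psi(j,k)}J(j,k)$ and both $*_g$ and $s$ are compatible with the addition on $\mathscr{P}(R-Graphs_n)$, I get $coef_{\psi(j,k)}(p)=s(G*_g J(j,k))$ for every pair $j<k$; because $\psi$ is a bijection onto $\{0,\dots,\binom{n}{2}-1\}$ these are exactly the coefficients of $p$.

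Next, exactly as in the proof of Theorem~\ref{t1}, for each pair $j<k$ I would produce a pseudoordering $g''$ with $s(G*_g J(j,k))=s(G*_{g''}J(1,2))$, using only that $\nu_{J(j,k)}$ depends on a vertex through the partition into $\{j\}$, $\{k\}$ and the rest in exactly the same shape as $\nu_{J(1,2)}$ does for $\{1\}$, $\{2\}$. Thus every coefficient of $p$ lies in $s(G*J(1,2))$. Conversely, by the computation in the proof of Lemma~\ref{lem:ir1} every element of $s(G*J(1,2))$ has the form $D+i\,\nu_G(\{v,u\})$, where $\{v,u\}$ is the pair sent to $\{1,2\}$ and $D$ is a real number that merely changes sign when $v$ and $u$ are swapped; the corresponding coefficient of $p$ is this value or its reflection $-D+i\,\nu_G(\{v,u\})$ across the imaginary axis, according to the $g$-order of $v$ and $u$. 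The one new point — the rest being a copy of Theorem~\ref{t1} — is that ``$z$ is a nonzero purely imaginary number'' means $\operatorname{Re}z=0$ and $z\neq0$, a property invariant under $z\mapsto-\overline z$. Hence $p$ has a coefficient that is a nonzero purely imaginary number if and only if $s(G*J(1,2))$ contains such a number — equivalently, by Lemma~\ref{lem:ir1}, if and only if $G$ has no local irregular labeling — and this conclusion does not depend on the choice of $g$.

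Finally I would chain the equivalences: there exists $p\in s(\mathscr{G}*M[x])$ none of whose coefficients is a nonzero purely imaginary number $\iff$ there are $G\in\mathscr{G}$ and a pseudoordering $g$ for which $s(G*_g M[x])$ has this property $\iff$ there is $G\in\mathscr{G}$ with $s(G*J(1,2))$ containing no nonzero purely imaginary number $\iff$ some $G\in\mathscr{G}$ admits a local irregular labeling $\iff$ $\mathscr{G}$ contains a local irregular labeled graph. The only genuine obstacle I anticipate is the reflection symmetry just isolated: a single polynomial $p$ records only one orientation of each vertex pair, whereas $s(G*J(1,2))$ runs over both orderings, so one must verify that the target property cannot tell a value from its mirror image; everything else is the bookkeeping already done for Theorem~\ref{t1}.
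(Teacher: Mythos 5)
Your proposal is correct and follows essentially the same route as the paper: expand $s(G*_g M[x])$ coefficient-wise into the summands $s(G*_g J(j,k))$, identify each with an element of $s(G*J(1,2))$ via a relabelling, and invoke Lemma~\ref{lem:ir1}. In fact you are slightly more careful than the paper on the one delicate point — when $g(v)>g(u)$ the coefficient is the reflection $z\mapsto-\overline{z}$ of the corresponding element of $s(G*J(1,2))$, and you correctly note that ``nonzero purely imaginary'' is invariant under this reflection, a detail the paper's proof passes over silently.
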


\begin{proof}
The proof is similar to the proof of Theorem \ref{t1}. Let $G\in \mathscr{G}$ be a local irregular labeled graph. {Note that $
s(G*_f J(1,2)),
$ is not a nonzero purely imaginary number for any pseudoordering $f$  by Lemma~\ref{lem:ir1}.}
Observe that for any $j<k$ we have $J(j,k) \cong J(1,2)$. Hence for any $f$ there exists $f'$ such that
\[ 
s(G*_f J(j,k))=s(G*_{f'} J(1,2)).
\]
This {implies} that $
s(G*_f J(j,k)),
$ is not a nonzero purely imaginary number.
Let $f$ be a pseudoordering $f:V(G)\rightarrow V(M[x])$ then \[
s(G*_f M[x])\in s(\mathscr{G}*M[x])
\]
and 
\begin{align*}\label{eq1}
s(G*_f M[x])&=s(G*_f\sum_{
\begin{subarray}{c}j,k \in \{0,\dots,|V(G)|-1\} \\ j<k
\end{subarray}
} x^{\psi(j,k)}\cdot J(j,k)) \\
&=\sum_{
\begin{subarray}{c}j,k \in \{0,\dots,|V(G)|-1\} \\ j<k
\end{subarray}
} x^{\psi(j,k)}\cdot s(G*_f J(j,k)).
\numberthis
\end{align*} 
As we showed before all of those coefficients 
are not nonzero purely imaginary numbers.

Let now $p \in s(\mathscr{G}*M[x])$ be a polynomial with no nonzero purely imaginary coefficients, this means that 
\[
p=s(G*_f M[x])
\]
for some $G\in \mathscr{G}$ and pseudoordering $f$. Similarly as in the another direction from equation (\ref{eq1}) we get that for all $j<k\in \{0,\dots, |V(G)|-1\}$ $s(G*_f J(j,k))$ is not a nonzero purely imaginary number. Let now $g$ be arbitrarily pseudoordering $g\colon V(G)\rightarrow V(J(1,2))$, we will show that $s(G*_g J(1,2))$ is not a  nonzero purely imaginary number. There exist $j,k$ such that 
\[
s(G*_g J(1,2))=s(G*_f J(j,k)),
\]
indeed, it could be satisfied by 
\[
j=f(g^{-1}(1)),~k=f(g^{-1}(2)).
\]
Hence $s(G*_g J(1,2))$ is  not a nonzero purely imaginary number.

\end{proof}

\begin{theorem}
1-2-3 conjecture holds if and only if for any simple finite unweighted graph $G$
with no connected component isomorphic to $K_2$ exists
\[
p \in s\left(I(G)*C(1,2,3)_{|V(G)|}*M[x]\right)
\]
such that all coefficients of $p$ are not purely imaginary numbers.
\end{theorem}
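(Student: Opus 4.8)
The plan is to combine the machinery already assembled in this section exactly as in the analogous theorems for anti-magic and irregular labelings. The key observation is that, by Lemma~\ref{ldz} and Remark~\ref{irr}, the set $I(G)*C(1,2,3)_{|V(G)|}$ is precisely the set of all $3$-colorings (edge-labelings with labels in $\{1,2,3\}$, not necessarily ``proper'') of $G$, each interpreted as a complete $\mathbb{C}$-weighted graph by appending zero weights on non-edges. So the existence of the desired polynomial $p$ should be translated, step by step, into a purely combinatorial statement about $3$-edge-labelings of $G$, and then the $1$-$2$-$3$ conjecture is recovered as the universally quantified version of that statement.

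First I would fix an arbitrary simple finite unweighted graph $G$ with no component isomorphic to $K_2$ and apply Lemma~\ref{123l2} with $\mathscr{G} = I(G)*C(1,2,3)_{|V(G)|}$. That lemma states that $\mathscr{G}$ contains a locally irregular labeled graph if and only if $s(\mathscr{G}*M[x])$ contains a polynomial all of whose coefficients are not nonzero purely imaginary numbers; unwinding $\mathscr{G}$ via the definition and the associativity of $*$ (used implicitly throughout \cite{Dz}), $s(\mathscr{G}*M[x]) = s\left(I(G)*C(1,2,3)_{|V(G)|}*M[x]\right)$, which is exactly the set in the statement. Next I would invoke Lemma~\ref{ldz} together with Remark~\ref{irr} to identify $I(G)*C(1,2,3)_{|V(G)|}$ with the set of all $3$-edge-labelings of $G$; since a $3$-edge-labeling of $G$ is exactly a labeling $\ell\colon E(G)\to\{1,2,3\}$, the statement ``$\mathscr{G}$ contains a locally irregular labeled graph'' becomes ``$G$ admits a $3$-edge-labeling that is vertex-coloring (local irregular)'', i.e. $\chi^\Sigma(G)\le 3$. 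Finally I would quantify over all such $G$: the $1$-$2$-$3$ conjecture is the assertion $\chi^\Sigma(G)\le 3$ for every such $G$, which is equivalent to: for every such $G$, the set $s\left(I(G)*C(1,2,3)_{|V(G)|}*M[x]\right)$ contains a polynomial with no nonzero purely imaginary coefficient. (I would also note that the statement as written says ``not purely imaginary''; the harmless discrepancy with Lemma~\ref{123l2}'s ``not \emph{nonzero} purely imaginary'' is resolved by observing that $0$ is allowed as a coefficient in both places, so the two phrasings pick out the same polynomials, a point I would either reconcile in a parenthetical or absorb into a remark that $0$ counts as non-``purely imaginary'' by convention here.)

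The main obstacle, such as it is, is not depth but bookkeeping: one must be careful that the notion of ``$k$-coloring'' delivered by Lemma~\ref{ldz} coincides with the edge-labeling $\ell\colon E(G)\to\{1,\dots,k\}$ appearing in the definition of $\chi^\Sigma(G)$, including the treatment of non-edges (weight $0$), and that ``locally irregular'' in Lemma~\ref{lem:ir1} and Lemma~\ref{123l2} is literally the vertex-coloring condition ``adjacent vertices receive distinct weighted degrees'' with the component-size hypothesis handled correctly. In particular, the hypothesis that no component is isomorphic to $K_2$ is exactly what makes ``$G$ has a local irregular labeling'' a nonvacuous and meaningful condition (a $K_2$ forces equal weighted degrees), and it must be carried through the quantification. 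Once these identifications are in place, the proof is a two-line chain of ``if and only if''s: Theorem/Lemma~\ref{123l2} $\Longleftrightarrow$ $I(G)*C(1,2,3)_{|V(G)|}$ contains a locally irregular graph $\Longleftrightarrow$ (by Lemma~\ref{ldz} and Remark~\ref{irr}) $G$ has a locally irregular $3$-edge-labeling $\Longleftrightarrow$ $\chi^\Sigma(G)\le 3$, and then universally quantifying over $G$ gives the equivalence with the $1$-$2$-$3$ conjecture.

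\begin{proof}
Fix a simple finite unweighted graph $G$ with no connected component isomorphic to $K_2$, and set $\mathscr{G} = I(G)*C(1,2,3)_{|V(G)|}$. By Remark~\ref{irr} and Lemma~\ref{ldz}, $\mathscr{G}$ is exactly the set of all $3$-edge-labelings (equivalently $3$-colorings, possibly non-proper) of $G$, each viewed as a complete $\mathbb{C}$-weighted graph by assigning weight $0$ to every non-edge of $G$. Thus $\mathscr{G}$ contains a local irregular labeled graph if and only if $\chi^\Sigma(G)\le 3$. On the other hand, by Lemma~\ref{123l2}, $\mathscr{G}$ contains a local irregular labeled graph if and only if there exists a polynomial
\[
p \in s(\mathscr{G}*M[x]) = s\left(I(G)*C(1,2,3)_{|V(G)|}*M[x]\right)
\]
all of whose coefficients are not nonzero purely imaginary numbers, i.e.\ are not purely imaginary (with $0$ counted as non-purely-imaginary). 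Combining these two equivalences, for a fixed such $G$ we have $\chi^\Sigma(G)\le 3$ if and only if the required polynomial $p$ exists. The $1$-$2$-$3$ conjecture asserts that $\chi^\Sigma(G)\le 3$ for every simple finite unweighted graph $G$ with no connected component isomorphic to $K_2$; quantifying the displayed equivalence over all such $G$ yields exactly the statement of the theorem.
\end{proof}
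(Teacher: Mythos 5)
Your proof is correct and follows essentially the same route as the paper: Lemma~\ref{ldz} (via Remark~\ref{irr}) identifies $I(G)*C(1,2,3)_{|V(G)|}$ with the set of all $3$-colorings of $G$, and Lemma~\ref{123l2} converts the existence of a local irregular labeled graph in that set into the stated condition on the coefficients of some $p$. Your explicit handling of the ``purely imaginary'' versus ``nonzero purely imaginary'' wording is a wrinkle the paper's own (very terse) proof does not address, but it does not change the argument.
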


\proof{
For 1-2-3 conjecture we want that for any $G$ of given properties exists a local irregular labeling of $G$ by colors $\{1,2,3\}$. By Lemma \ref{ldz}
\[
I(G)*C(1,2,3)_{|V(G)|}
\]
is equal to the set of all colorings of $G$ by colors $\{1,2,3\}$. Now by Lemma \ref{123l2} this condition is satisfied if and only if in this set exists a local irregular labeled graph.

}

\section{Domination}

The study of domination in graphs originated around 1850 with the problems of placing a minimum number of queens or other chess pieces on an $n \times n$ chess board to cover/dominate every square.  Formally, for a graph $G = (V,E)$ be a graph. A set $S \subset V$ is a \textit{dominating set} of $G$ if every vertex in $V-S$ is adjacent to some vertex in $S$. The domination number $\gamma(G)$ of $G$ is the minimum cardinality of a dominating set.
Roman domination is a variation of domination suggested by ReVelle  \cite{ref_Re}. This concept and its variation have been widely explored since then.  Emperor Constantine had the requirement for deploying two types of armies, stationary and traveling, in cities such that each city is either protected by a stationary army deployed there or is protected by a traveling army in an adjacent city that also has a stationary army. We may formulate the problem in terms of graphs. Graphs are simple in this paper. A Roman dominating function of a graph $G=(V,E)$ is a function $f\colon V \to\{0,1,2\}$ such that every vertex $v$ with $f(v)=0$ adjacent to some vertex $u$ with $f(u)=2$. The weight of a Roman dominating function $f$ is the value $w(f)=\sum_{v\in V(G)}f(v)$. The \textit{Roman domination number} of $G$, denoted by $\gamma_R(G)$, is the minimum weight of a Roman dominating function of $G$.

Recently, Roushini Leely Pushpam and Malini Mai \cite{ref_RoMa} initiated the study of the edge version of Roman domination. 
An edge Roman dominating function of a graph $G$ is a function $f\colon E\to\{0,1,2\}$ satisfying the condition that every edge $e$ with $f(e)=0$ is adjacent to some edge $e'$ with $f(e')=2$. The \textit{edge Roman domination number} of $G$, denoted by $\gamma'_R(G)$, is the minimum weight $w(f)=\sum_{e\in E(G)}f(e)$ of an edge Roman dominating function $f$ of $G$.

\subsection{Domination number}
\begin{definition}
Let $ k \in \{1,\dots,|V(G)|-1\}$, then we define a $\mathbb{R}[x]$-weighted complete graph $D_k[x]$ on the set of vertices $V(D_k[x])=\{1,\dots,|V(G)|\}$ with the following weight function, let $j<l \in \{1,\dots,|V(G)|\}$
\begin{align*}
\nu_{D_k[x]}(j,l)=\begin{cases}  x^{j-1}  &\text{if } j \leq |V(G)|-k <l\\
0 &\text{otherwise}.
\end{cases}
\end{align*}
\end{definition}

\begin{lemma}
Let $G$ be an unweighted graph, $k\in \{1,\dots,|V(G)|-1\}$ and \[f: V(D_k[x]) \rightarrow V(I(G))\] pseudoordering,
then $f(\{|V(G)|-k+1,\dots , |V(G)|\})$ is a dominating set of $G$ if and only if 
\[
s(D_k[x]*_f I(G))
\]
has all coefficients nonzero.
\end{lemma}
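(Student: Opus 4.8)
The plan is to compute $s(D_k[x]*_f I(G))$ explicitly and then read off its coefficients. Write $n=|V(G)|$, $A=\{1,\dots,n-k\}$ and $B=\{n-k+1,\dots,n\}$, so that $A,B$ partition $V(D_k[x])=\{1,\dots,n\}$ with $|B|=k$ (and $1\le n-k\le n-1$, so both sets are nonempty). First I would note that for $j<l$ the weight $\nu_{D_k[x]}(\{j,l\})$ is nonzero exactly when $j\in A$ and $l\in B$, in which case it equals $x^{j-1}$; thus the support of $D_k[x]$ is the complete bipartite graph between $A$ and $B$. By the definitions of $*_f$ and of $s(\cdot)$ it then follows that
\[
s(D_k[x]*_f I(G))=\sum_{j\in A,\ l\in B} x^{j-1}\,\nu_{I(G)}(\{f(j),f(l)\})=\sum_{j=1}^{n-k} c_j\, x^{j-1},
\]
where $c_j=\bigl|\{\,l\in B:\{f(j),f(l)\}\in E(G)\,\}\bigr|$, i.e.\ $c_j$ is the number of neighbours of the vertex $f(j)$ that lie in $f(B)$.

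Next I would put $S=f(B)=f(\{n-k+1,\dots,n\})$. Since $f$ is a bijection, $|S|=k$ and $f(A)=V(G)\setminus S$; moreover, as $j$ ranges over $A$, $f(j)$ ranges over all of $V(G)\setminus S$. Hence $c_j=|N_G(f(j))\cap S|$, and by definition $S$ is a dominating set of $G$ exactly when every vertex of $V(G)\setminus S$ has a neighbour in $S$, that is, exactly when $c_j\ge 1$ for every $j\in\{1,\dots,n-k\}$.

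It remains to connect this with the coefficients of the polynomial: the coefficient of $x^{j-1}$ in $s(D_k[x]*_f I(G))$ is $c_j$ for $j\in\{1,\dots,n-k\}$, and since each $c_j$ is a nonnegative integer, $c_j\neq 0$ if and only if $c_j\ge 1$. Therefore $s(D_k[x]*_f I(G))$ has all its coefficients nonzero if and only if $c_j\ge 1$ for every $j\le n-k$, which by the previous step is equivalent to $f(\{n-k+1,\dots,n\})$ being a dominating set of $G$. I do not anticipate a real obstacle here: the argument is essentially bookkeeping about which edges of $D_k[x]$ carry nonzero weight, together with matching the index blocks $A$ and $B$ against $V(G)\setminus S$ and $S$. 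The only point deserving a word of care is the reading of ``all coefficients nonzero'': it refers to the coefficients of $1,x,\dots,x^{n-k-1}$, and the nonnegativity of the counts $c_j$ is what lets one replace ``nonzero'' by ``positive''.
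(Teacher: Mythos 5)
Your proof is correct and follows essentially the same route as the paper's: identify the nonzero-weight edges of $D_k[x]$ as the complete bipartite graph between $A=\{1,\dots,n-k\}$ and $B=\{n-k+1,\dots,n\}$, and read off the coefficient of $x^{j-1}$ as the number of neighbours of $f(j)$ inside $f(B)$. Your single explicit computation packages the paper's two separate implications into one identity (and in passing recovers the paper's subsequent remark about the coefficients counting neighbours in the dominating set).
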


\begin{proof}
Let $D=f(\{|V(G)|-k+1,\dots , |V(G)|\})$ be a dominating set and $j\in \{0,1,\dots |V(G)|-k-1\}$ we will show 
that \[
coef_j\left(s(D_k[x]*_f I(G)) \right) > 0.
\]

If we now consider a vertex $v=f(j+1)$, because $v\notin D$

there exists $u \in D$ such that $\{v,u\}\in E(G)$.
Let us denote $l=f^{-1}(u)$ now 
\[
\nu_{D_k[x]*_f I(G)}(\{j+1,l\})=\nu_{D_k[x]}(\{j+1,l\}) \cdot \nu_{I(G)}(v,u)=x^j \cdot 1.
\]
Hence 
\[
coef_j\left(s(D_k[x]*_f I(G)) \right) > 0.
\]

Let now 
\[
s(D_k[x]*_f I(G))
\]
has all coefficients nonzero.
We need to show that for any \[v \notin f(\{|V(G)|-k+1,\dots , |V(G)|\})\]
there exists an edge $\{v,u\}\in E(G)$ such that $u\in f(\{|V(G)|-k+1,\dots , |V(G)|\})$.
Let us denote $j\in \{1,\dots , |V(G)|-k\}$ such that  $f(j)=v$. Because coefficient \[
coef_{j-1}\left(s(D_k[x]*_f I(G)) \right) > 0,
\]
there exists $l\in \{|V(G)|-k+1,\dots , |V(G)|\}$ such that 
\[
\nu_{D_k[x]*_f I(G)}(\{j,l\})=x^{j-1}.
\]
Hence $\{f(j),f(l)\}\in E(G)$.
\end{proof}

\begin{remark}
If we denote $D=f(\{|V(G)|-k+1,\dots , |V(G)|\}),$ coefficients from the last lemma are equal to the number of neighbors in complement of $D$. 
\[coef_j\left(s(D_k[x]*_f I(G)) \right)=|N_G(f(j+1))\cap (V(G)\setminus D)|
\]
where $N_G(v)$ is a neighborhood of $v$ in $G$.
\end{remark}

\begin{theorem}
Let $G$ be a unweighted graph and $k\in \{1,\dots,|V(G)|-1\}$
then there exists a dominating set of cardinality $k$ if and only if there exists a polynomial 
\[
p\in s(D_k[x]* I(G))
\]
which has all coefficients nonzero.
\end{theorem}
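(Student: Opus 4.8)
The plan is to reduce the statement to the preceding lemma, which already characterizes when a fixed pseudoordering $f$ produces a dominating set in terms of the coefficients of $s(D_k[x]*_f I(G))$ being nonzero. Concretely, I would argue that the existence of a dominating set of cardinality $k$ in $G$ is equivalent to the existence of a pseudoordering $f \colon V(D_k[x]) \to V(I(G))$ whose associated ``last $k$ vertices'' image $f(\{|V(G)|-k+1,\dots,|V(G)|\})$ is a dominating set, and then quote the lemma to turn that into the statement about coefficients of some polynomial in $s(D_k[x]*I(G))$.

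First I would prove the forward direction. Suppose $D \subseteq V(G)$ is a dominating set with $|D| = k$. Enumerate $V(G) \setminus D = \{v_1,\dots,v_{|V(G)|-k}\}$ and $D = \{v_{|V(G)|-k+1},\dots,v_{|V(G)|}\}$ in any order, and define the bijection $f \colon \{1,\dots,|V(G)|\} = V(D_k[x]) \to V(I(G)) = V(G)$ by $f(i) = v_i$. Then $f$ is a pseudoordering with $f(\{|V(G)|-k+1,\dots,|V(G)|\}) = D$ a dominating set, so by the previous lemma $s(D_k[x]*_f I(G))$ has all coefficients nonzero; since this polynomial lies in $s(D_k[x]*I(G))$ by definition of the combinatorial spectrum, we may take $p = s(D_k[x]*_f I(G))$.

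For the converse, suppose $p \in s(D_k[x]*I(G))$ has all coefficients nonzero. By the definition of $s(D_k[x]*I(G)) = s(D_k[x] * I(G))$, there is a pseudoordering $f \colon V(D_k[x]) \to V(I(G))$ with $p = s(D_k[x]*_f I(G))$. Since all coefficients of $p$ are nonzero, the previous lemma immediately gives that $f(\{|V(G)|-k+1,\dots,|V(G)|\})$ is a dominating set of $G$; and this set has cardinality exactly $k$ because $f$ is a bijection and the index set $\{|V(G)|-k+1,\dots,|V(G)|\}$ has $k$ elements. Hence $G$ has a dominating set of cardinality $k$.

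I do not expect any serious obstacle here: the theorem is essentially a repackaging of the lemma together with the observation that every subset of $V(G)$ of size $k$ arises as $f(\{|V(G)|-k+1,\dots,|V(G)|\})$ for a suitable bijection $f$. The only point requiring minor care is the bookkeeping about which coefficients are relevant: the polynomial $s(D_k[x]*_f I(G))$ only has potentially-nonzero coefficients in degrees $0,\dots,|V(G)|-k-1$ (by the definition of $\nu_{D_k[x]}$), so ``all coefficients nonzero'' should be read as ``all coefficients $coef_j(p)$ for $j \in \{0,\dots,|V(G)|-k-1\}$ are nonzero,'' consistently with the statement and proof of the preceding lemma. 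With that reading the equivalence is immediate in both directions.
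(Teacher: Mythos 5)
Your proposal is correct and takes essentially the same route as the paper, which simply states that the theorem ``follows from the previous lemma''; you have just written out explicitly the bijection between size-$k$ subsets and images $f(\{|V(G)|-k+1,\dots,|V(G)|\})$ that the paper leaves implicit. Your closing remark about reading ``all coefficients nonzero'' as referring to degrees $0,\dots,|V(G)|-k-1$ is a sensible clarification consistent with the lemma's proof.
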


\begin{proof}
Follows from the previous lemma.
\end{proof}

\subsection{Edge Roman domination number}

We start with the following definition:
\begin{definition}
\[
C(0,-1,y)=\bigl\{H\in \mathbb{Z}[y]-\text{Graph}_n\bigm|s(H*E(1,2))\subseteq \{0,-1,y\}\bigr\}
\]
\end{definition}

\begin{lemma}\label{l0-1y}
Let $G$ be an unweighted graph then the following set 
\[
I(G)*C(0,-1,y)
\]
is equal to a set of all colorings of $G$ by colors $\{0,-1,y\}$ (and all no-edges of $G$ are colored also by $0$).

\end{lemma}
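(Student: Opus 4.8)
The plan is to unwind both sides of the claimed equality directly from the definitions of the operations $*$ and the set $C(0,-1,y)$, exactly in the spirit of the proof of Lemma~\ref{ldz} and Lemma~\ref{l1}. First I would recall that, by definition, $C(0,-1,y)$ consists of those $\mathbb{Z}[y]$-weighted complete graphs $H$ on $n=|V(G)|$ vertices for which $s(H*E(1,2))\subseteq\{0,-1,y\}$; by the same computation as in the proof of Lemma~\ref{l1}, $s(H*E(1,2))=\{\nu_H(e)\mid e\in E(H)\}$, so membership in $C(0,-1,y)$ is precisely the statement that every edge weight of $H$ lies in $\{0,-1,y\}$. Thus $C(0,-1,y)$ is exactly the set of all $\{0,-1,y\}$-colorings of the complete graph $K_n$.

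Next I would analyze $I(G)*C(0,-1,y)$. For a bijection $f\colon V(H)\to V(I(G))$, the graph $I(G)*_f H$ has edge weights $\nu_{I(G)*_f H}(e)=\nu_{I(G)}(f(e))\cdot\nu_H(e)$, which equals $\nu_H(e)$ when $f(e)\in E(G)$ and $0$ when $f(e)\notin E(G)$. So for each $H\in C(0,-1,y)$ and each bijection $f$, the resulting graph is a $\{0,-1,y\}$-coloring of $K_n$ in which every non-edge of $G$ (pulled back through $f$) receives weight $0$; this shows the left-hand side is contained in the right-hand side. For the reverse inclusion, given any $\{0,-1,y\}$-coloring $c$ of $G$ (with all non-edges of $G$ colored $0$), I would take $f$ to be the identity bijection and let $H$ be the complete graph on $V(G)$ with $\nu_H=c$ (interpreting $c$ on all of $E(K_n)$, which is legitimate since the non-edges already carry weight $0\in\{0,-1,y\}$); then $H\in C(0,-1,y)$ and $I(G)*_f H=c$. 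Hence equality holds.

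The only genuinely delicate point — the ``main obstacle'' — is the bookkeeping around the fixed vertex set $\{1,\dots,n\}$ and the role of the bijection $f$: one must check that ranging over all bijections $f$ in the definition of $*$ does not enlarge the set of colorings beyond all $\{0,-1,y\}$-colorings, and conversely that every such coloring is realized. This is handled exactly as in Lemma~\ref{l1} and Lemma~\ref{123l2}: since $C(0,-1,y)$ is closed under relabeling vertices (its defining condition is symmetric — it only constrains the multiset of edge weights), for any target coloring and any bijection one can precompose with a suitable permutation to land on the identity, so no generality is lost. I would therefore only spell out the identity-bijection case in detail and remark that the general case follows by this symmetry, keeping the argument parallel to the earlier lemmas in the paper.
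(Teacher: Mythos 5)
Your proposal is correct and follows essentially the same route as the paper: the paper's own proof is the one-line observation that every edge of weight $1$ in $I(G)$ gets multiplied by $0$, $-1$, or $y$ in every possible way, and you have simply spelled out the same argument in full (identifying $C(0,-1,y)$ with all $\{0,-1,y\}$-weightings of $K_n$ via the $s(H*E(1,2))$ computation, and noting that multiplication by $I(G)$ zeroes out non-edges). The extra care you take with the bijection $f$ and the fixed vertex set is a reasonable elaboration, not a departure.
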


\begin{proof}
Every edge weighted by $1$ in $I(G)$ is multiplied by $0,-1,y$ in any possible way.
\end{proof}
For a coloring $f\colon V(G)\to\{-1,0,y\}$ we define $\tilde{f} \colon V(G)\to\{0,1,2\}$ by$$\tilde{f}(v)=\begin{cases}
  0&\text{if } f(v)=-1\\
  1&\text{if } f(v)=0\\
  2&\text{if } f(v)=y.
\end{cases}$$ 

Before we proceed, recall that $s(H) \in \mathbb{Z}[y]$ is a polynomial of degree $1$.
\begin{lemma}\label{w(f)}
Let $G$ be an unweighted graph and $H\in I(G)*C(0,-1,y)$ then \[
w(\widetilde{\nu_H})=|E(G)|+s(H)(1).
\]
\end{lemma}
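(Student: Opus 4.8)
The plan is to compute $w(\widetilde{\nu_H})$ directly from the definition of the weight of an edge Roman dominating function, namely $w(\widetilde{\nu_H}) = \sum_{e \in E(G)} \widetilde{\nu_H}(e)$, and to compare this sum term-by-term against $s(H)(1) = \sum_{e} \nu_H(e)$ evaluated at $y=1$. First I would recall that by Lemma \ref{l0-1y}, since $H \in I(G)*C(0,-1,y)$, the weight $\nu_H(e)$ of every edge $e$ of $G$ lies in $\{0,-1,y\}$, and every non-edge of $G$ is colored $0$; hence $\nu_H(e) \in \{0,-1,y\}$ for all $e \in E(K_n)$, with value in $\{0,-1,y\}$ only possibly nonzero on $E(G)$. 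So both sums in question effectively range over $E(G)$ (the non-edges contribute $0$ to $s(H)$ and are not part of $\widetilde{\nu_H}$'s domain).

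Next I would set up the key per-edge identity relating the two coloring schemes. By the definition of $\widetilde{f}$ applied to $f = \nu_H$, for each edge $e \in E(G)$ we have $\widetilde{\nu_H}(e) = 0$ when $\nu_H(e) = -1$, $\widetilde{\nu_H}(e) = 1$ when $\nu_H(e) = 0$, and $\widetilde{\nu_H}(e) = 2$ when $\nu_H(e) = y$. On the other hand, evaluating the edge's contribution to $s(H)$ at $y = 1$ gives $\nu_H(e)(1) = -1$, $0$, or $1$ respectively. Comparing the two columns, in every one of the three cases we get $\widetilde{\nu_H}(e) = \nu_H(e)(1) + 1$. Summing this identity over all $e \in E(G)$ yields
\[
w(\widetilde{\nu_H}) = \sum_{e \in E(G)} \widetilde{\nu_H}(e) = \sum_{e \in E(G)} \bigl(\nu_H(e)(1) + 1\bigr) = \Bigl(\sum_{e \in E(G)} \nu_H(e)(1)\Bigr) + |E(G)|.
\]
Finally, since $\nu_H(e) = 0$ on all non-edges, the inner sum equals $s(H)(1) = \bigl(\sum_{e \in E(K_n)} \nu_H(e)\bigr)(1)$, giving $w(\widetilde{\nu_H}) = |E(G)| + s(H)(1)$ as claimed.

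I do not anticipate a genuine obstacle here; the statement is essentially bookkeeping. The only point requiring a moment of care is confirming that the linear-polynomial-valued weights $\nu_H(e) \in \{0,-1,y\} \subset \mathbb{Z}[y]$ behave well under evaluation at $1$ and under summation — i.e. that $s(H)(1) = \sum_e \nu_H(e)(1)$, which holds because evaluation at a point is a ring homomorphism $\mathbb{Z}[y] \to \mathbb{Z}$ and $s$ is defined as a finite sum. One should also double-check the fencepost: the constant $+1$ is contributed once per edge of $G$ (not per edge of $K_n$), which is exactly why the additive term is $|E(G)|$ and not $\binom{n}{2}$; this matches the remark preceding the lemma that $s(H)$ is a degree-$1$ polynomial whose constant term already absorbs the $-1$'s and whose behavior is governed only by the edges of $G$.
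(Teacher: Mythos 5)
Your proof is correct and amounts to the same direct bookkeeping as the paper's: you verify the per-edge identity $\widetilde{\nu_H}(e)=\nu_H(e)(1)+1$ and sum over $E(G)$, whereas the paper counts the occurrences of each colour via $p(0)$ and $p(1)$ and forms the weighted total; the two computations are trivially equivalent. Your handling of the non-edges (they contribute $0$ to $s(H)$ and the constant $+1$ is picked up only $|E(G)|$ times) matches what the paper does implicitly.
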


\begin{proof}
Let us denote $p=s(H)$. Then $-p(0)$ is equal to the number of $-1$ in $H$, hence number of $0$ in $\widetilde{\nu_H}$. Moreover $p(1)-p(0)$ is equal to the number of $y$ in $H$, hence the number of $2$ in $\widetilde{\nu_H}$. Thus \[
|E(G)|-(p(1)-p(0))-(-p(0))=|E(G)|-p(1)+2p(0)
\]
is equal to number of $1$ in $\widetilde{\nu_H}$. And now 
\[
w(\widetilde{\nu_H})=|E(G)|-p(1)+2p(0)+2p(1)-2p(0)=|E(G)|+p(1).
\]
\end{proof}
The next definition will be similar to Definition \ref{defM[x]}.
\begin{definition}
Let $j< k \in \{1,\dots,|V(G)|\}$, then we define a $\mathbb{C}$-weighted complete graph $R(j,k)$ on the set of vertices $V(R(j,k))=\{1,\dots,|V(G)|\}$ with weight function
\begin{align*}
\nu_{R(j,k)}(v,u)=\begin{cases}  i  &\text{if } |\{j,k\} \cap \{v,u\}|=2 \\
1 &\text{if } |\{j,k\} \cap \{v,u\}|=1 \\
0 &\text{otherwise}.
\end{cases}
\end{align*}
Then we define a $\mathbb{C}[x]_{\leq {{|V(G)|}\choose{2}}-1}$-weighted complete graph
\[
R[x]=\sum_{
\begin{subarray}{c}j,k \in \{0,\dots,|V(G)|-1\} \\ j<k
\end{subarray}
} x^{\psi(j,k)}\cdot R(j,k),
\]
where $\psi(j,k) $ is the sequence number in lexicographic order defined in the Subsection \ref{aml}.
\end{definition}

\begin{lemma}
Let $G$ be an unweighted graph and $H\in I(G)*C(0,-1,y)$
then $\widetilde{\nu_H}$ is an edge Roman domination function if and only if 
\[
s(H*R(1,2))
\]
does not contain an unreal constant polynomial, equivalently \[
\forall p \in s(H*R(1,2)) : p \notin-i+\mathbb{Z}.\]
\end{lemma}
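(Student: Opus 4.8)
The plan is to compute $s(H *_f R(1,2))$ explicitly for an arbitrary pseudoordering $f$ and then simply read off when it lands in $-i+\mathbb{Z}$, in the same spirit as the proof of Lemma~\ref{lem:ir1}. First I would recall that, by Lemma~\ref{l0-1y}, $H\in I(G)*C(0,-1,y)$ is a coloring of $G$ by $\{0,-1,y\}$ in which every non-edge of $G$ is colored $0$; in particular $\nu_H$ is supported on $E(G)$ and takes values in $\{0,-1,y\}$, so $\widetilde{\nu_H}$ restricted to $E(G)$ is a genuine candidate for an edge Roman dominating function.

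Next, fix $f$ and put $v=f^{-1}(1)$, $u=f^{-1}(2)$. By the definition of $R(1,2)$ the factor $\nu_{R(1,2)}(f(e))$ equals $i$ when $e=\{v,u\}$, equals $1$ when $e$ is incident with exactly one of $v,u$, and equals $0$ otherwise, so
\[
s(H *_f R(1,2))= i\,\nu_H(\{v,u\}) \;+\; \sum_{w\neq v,u}\nu_H(\{v,w\}) \;+\; \sum_{w\neq v,u}\nu_H(\{u,w\}).
\]
Since $\nu_H$ vanishes off $E(G)$, the two sums only see the $G$-edges incident with $v$ or $u$ other than $\{v,u\}$; write their total as $-a+by\in\mathbb{Z}[y]$, where $a\ge 0$ is the number of such edges colored $-1$ and $b\ge 0$ the number colored $y$. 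Now split on $\nu_H(\{v,u\})\in\{0,-1,y\}$: if it is $0$ the expression is the real polynomial $-a+by$; if it is $y$ it is $-a+(b+i)y$, which is non-constant (the coefficient of $y$ has imaginary part $1$); and if it is $-1$ it is $-a+by-i$. Hence $s(H *_f R(1,2))$ is a non-real constant polynomial if and only if $\nu_H(\{v,u\})=-1$ and $b=0$, in which case it equals $-a-i\in -i+\mathbb{Z}$. This case analysis also settles the equivalence of the two phrasings in the statement, since the only non-real constants attainable in $s(H*R(1,2))$ are those in $-i+\mathbb{Z}$.

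Finally I would translate back through $\widetilde{\nu_H}$: the condition $\nu_H(\{v,u\})=-1$ says $\{v,u\}\in E(G)$ with $\widetilde{\nu_H}(\{v,u\})=0$, and $b=0$ says no $G$-edge sharing a vertex with $\{v,u\}$ is colored $y$, i.e.\ none has $\widetilde{\nu_H}$-value $2$. As $f$ ranges over all pseudoorderings, the unordered pair $\{v,u\}=\{f^{-1}(1),f^{-1}(2)\}$ ranges over all two-element subsets of $V(G)$, so $s(H*R(1,2))$ meets $-i+\mathbb{Z}$ exactly when $G$ has an edge $e_0$ with $\widetilde{\nu_H}(e_0)=0$ that is adjacent to no edge of value $2$ — that is, exactly when $\widetilde{\nu_H}$ is \emph{not} an edge Roman dominating function. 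Contraposing gives the lemma. I do not expect a genuine obstacle here; the only points that need care are keeping edges and non-edges of $G$ apart (non-edges contribute $0$ to the sum and may be ignored) and treating the weights as elements of $\mathbb{C}[y]$, so that "constant" and "non-real" are unambiguous when isolating the $-i+\mathbb{Z}$ case.
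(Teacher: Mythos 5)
Your proof is correct and follows essentially the same route as the paper's: compute $s(H*_f R(1,2))$ explicitly and case-split on whether the edge sent to the $i$-weighted pair is colored $0$, $y$, or $-1$, observing that only the last case with no adjacent $y$-edge produces a value in $-i+\mathbb{Z}$. Your write-up is in fact somewhat more complete, since the paper leaves the converse direction and the equivalence of the two phrasings as "similar."
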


\begin{proof}
Let $H\in I(G)*C(0,-1,y)$ such that any edge in $H$ weighted by $-1$ has at least one adjacent edge weighted by $y$ ($\widetilde{\nu_H}$ is Roman dominating function).
Let $g$ be an arbitrary pseudoordering.
If an edge weighted by $0$ is mapped to $i$ by $g$ then 
\[s(H*_g R(1,2)) \in \mathbb{Z}[y].\]
If $y$ is mapped to $i$ then 
\[s(H*_g R(1,2)) \in iy+\mathbb{Z}[y].\]
And if $-1$ is mapped to $i$ then 
\[s(H*_g R(1,2)) \notin -i+\mathbb{Z}\]
because $-1$ has at least one neighbor weighted by $y$.

Another direction is similar.
\end{proof}
The proof of the below lemma is similar to the proof of Lemma~\ref{123l2}.
\begin{lemma}
Let $\mathscr{H}\subseteq I(G)*C(0,-1,y)$, then $\mathscr{H}$ contains a graph satisfying edge Roman condition if and only if there exists in $s(\mathscr{H}*R[x])$ a polynomial $p$ such that all its coefficients $coef_{x^j}(p)\in \mathbb{C}[y]$ are not in $-i+ \mathbb{Z}$.
\end{lemma}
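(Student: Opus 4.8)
The plan is to reproduce the proof of Lemma~\ref{123l2} almost verbatim, with the family $\{J(j,k)\}$ and the polynomial $M[x]$ replaced by $\{R(j,k)\}$ and $R[x]$, and the predicate of being a nonzero purely imaginary number replaced by that of lying in $-i+\mathbb{Z}$. Three ingredients are needed. First, the characterization from the preceding lemma: for $H\in I(G)*C(0,-1,y)$, the coloring $\widetilde{\nu_H}$ is an edge Roman dominating function if and only if $s(H*_g R(1,2))\notin -i+\mathbb{Z}$ for every pseudoordering $g\colon V(G)\to V(R(1,2))$. Second, the fact that $R(j,k)\cong R(1,2)$ as $\mathbb{C}$-weighted complete graphs for all $j<k$ (and $R(j,k)\cong R(k,j)$): the transposition of $V(R(1,2))$ carrying the pair $\{1,2\}$ onto $\{j,k\}$ is weight-preserving, so for every pseudoordering $f\colon V(G)\to V(R[x])$ and every pair $j<k$ there is a pseudoordering $f'$ with $s(H*_f R(j,k))=s(H*_{f'} R(1,2))$, and conversely every $g\colon V(G)\to V(R(1,2))$ arises this way from $f$ by taking $\{j,k\}=\{f(g^{-1}(1)),f(g^{-1}(2))\}$. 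Third, the linearity computation already carried out in the proof of Lemma~\ref{123l2}: since $R[x]=\sum_{j<k}x^{\psi(j,k)}\cdot R(j,k)$, one has $s(H*_f R[x])=\sum_{j<k}x^{\psi(j,k)}\cdot s(H*_f R(j,k))$, so the coefficient of $x^{\psi(j,k)}$ in $s(H*_f R[x])$ equals $s(H*_f R(j,k))$ and every remaining coefficient is $0$.

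For the forward implication I would take $H\in\mathscr{H}$ with $\widetilde{\nu_H}$ an edge Roman dominating function, apply the first ingredient to get $s(H*_g R(1,2))\notin -i+\mathbb{Z}$ for all $g$, and then upgrade this via the second ingredient to $s(H*_f R(j,k))\notin -i+\mathbb{Z}$ for every pseudoordering $f$ and every $j<k$. Fixing any pseudoordering $f\colon V(G)\to V(R[x])$ and setting $p=s(H*_f R[x])\in s(\mathscr{H}*R[x])$, the third ingredient shows every coefficient of $p$ equals some $s(H*_f R(j,k))$ (or is $0$); since $0\notin -i+\mathbb{Z}$ too, $p$ is the required polynomial.

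For the reverse implication I would start with $p\in s(\mathscr{H}*R[x])$ all of whose coefficients avoid $-i+\mathbb{Z}$, write $p=s(H*_f R[x])$ for some $H\in\mathscr{H}$ and pseudoordering $f$, and read off from the third ingredient that $s(H*_f R(j,k))\notin -i+\mathbb{Z}$ for every $j<k$. Then, for an arbitrary pseudoordering $g\colon V(G)\to V(R(1,2))$, I would put $j=\min\{f(g^{-1}(1)),f(g^{-1}(2))\}$ and $k=\max\{f(g^{-1}(1)),f(g^{-1}(2))\}$ and invoke the second ingredient to obtain $s(H*_g R(1,2))=s(H*_f R(j,k))\notin -i+\mathbb{Z}$. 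As $g$ was arbitrary, the first ingredient yields that $\widetilde{\nu_H}$ is an edge Roman dominating function, i.e.\ $H$ satisfies the edge Roman condition.

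I do not anticipate a genuine obstacle here: this is a packaging lemma and the paper itself flags it as routine. The only care required is bookkeeping — the definition of $R[x]$ indexes $j,k$ over $\{0,\dots,|V(G)|-1\}$ whereas $R(j,k)$ is defined for $j,k\in\{1,\dots,|V(G)|\}$, so a single index convention should be fixed; $R(j,k)$ is defined only for $j<k$, so when $f(g^{-1}(1))>f(g^{-1}(2))$ one first passes through the trivial symmetry $R(j,k)\cong R(k,j)$; and one should note that $-i+\mathbb{Z}$ consists of constant polynomials only, so a coefficient of $p$ that happens to be a non-constant element of $\mathbb{C}[y]$ (for instance when the edge of $H$ sent to $\{1,2\}$ carries the label $y$) is automatically outside $-i+\mathbb{Z}$ — a point already subsumed in the first ingredient.
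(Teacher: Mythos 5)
Your proposal is correct and follows exactly the route the paper intends: the paper's own ``proof'' is just the remark that the argument is similar to that of Lemma~\ref{123l2}, and you have carried out that adaptation faithfully (isomorphism $R(j,k)\cong R(1,2)$, linearity of $s$ over the sum defining $R[x]$, and the characterization via the preceding lemma). Your side remarks --- the index-convention mismatch in the definition of $R[x]$, the symmetry $R(j,k)\cong R(k,j)$, and the observation that $0\notin -i+\mathbb{Z}$ --- are all correct and actually supply details the paper omits.
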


\begin{theorem}
Let $G$ be an unweighted graph and $k<|E(G)|$ then 
$\gamma'_R(G)\leq k$ if and only if there exists a polynomial
\[
p\in\left\{q \in s\bigl(I(G)*C(0,-1,y)*R[x]\bigr)\middle| |E(G)|+\frac{q(1,1)}{2|V(G)|-4+i}\leq k\right\}
\] 
such that all coefficients $coef_{x^j}(p)\notin -i+ \mathbb{Z}$ 
for $j\in \left\{0,\dots,{|V(G)|\choose 2}-1 \right\}$.

\end{theorem}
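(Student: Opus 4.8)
The plan is to combine the structural lemmas already established for edge Roman domination with the weight-tracking from Lemma~\ref{w(f)}, exactly as the antimagic theorem was built from Theorem~\ref{t1} and Lemma~\ref{ldz}. First I would unpack the combinatorial spectrum $s\bigl(I(G)*C(0,-1,y)*R[x]\bigr)$: a polynomial $p$ in it is of the form $s\bigl(H*_f R[x]\bigr)$ for some $H\in I(G)*C(0,-1,y)$ and some pseudoordering $f$. By Lemma~\ref{l0-1y}, such an $H$ is precisely a coloring of $G$ by $\{0,-1,y\}$ with non-edges colored $0$, so $\widetilde{\nu_H}\colon E(G)\to\{0,1,2\}$ is a well-defined edge labeling. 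Then, as in the proof of Lemma~\ref{123l2}, the condition ``all coefficients $coef_{x^j}(p)\notin -i+\mathbb{Z}$ for $j\in\{0,\dots,\binom{|V(G)|}{2}-1\}$'' is equivalent, via the isomorphisms $R(j,k)\cong R(1,2)$ and the decomposition of $R[x]$ as $\sum x^{\psi(j,k)} R(j,k)$, to $s(H*R(1,2))$ containing no element of $-i+\mathbb{Z}$; by the lemma preceding the last one, this holds iff $\widetilde{\nu_H}$ is an edge Roman dominating function.

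Second, I would handle the weight constraint. Writing $q=s(H)$, Lemma~\ref{w(f)} gives $w(\widetilde{\nu_H})=|E(G)|+q(1)$, where $q\in\mathbb{Z}[y]$ has degree $1$. The subtlety is that $p$ records, through its coefficients, the values $s(H*_f R(j,k))$, not $s(H)$ itself, so I must extract $q(1)$ from $p$. Here I would use that the total sum $s(R[x])$, evaluated appropriately, counts each edge with multiplicity: summing the coefficient polynomials $s(H*_f R(j,k))$ over all $j<k$, each edge $\{v,u\}$ of $H$ contributes its weight once for each pair $(j,k)$ meeting $\{v,u\}$ in the right pattern. Setting $x=1$ collapses $p$ to $\sum_{j<k}s(H*_f R(j,k))$, and then setting $y=1$ as well replaces each of the three weight-types $0,-1,y$ by $0,-1,1$; a short count shows that an edge of $H$ of weight $w\in\{0,-1,1\}$ gets counted with a fixed combinatorial multiplicity built from $2(|V(G)|-2)$ (the number of $j<k$ pairs giving coefficient $1$ on that edge) plus $i$ (the single pair giving coefficient $i$), i.e.\ total multiplier $2|V(G)|-4+i$. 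Hence $p(1,1)=(2|V(G)|-4+i)\cdot s(H)(1)$, which is why the theorem divides by exactly that quantity: $\dfrac{p(1,1)}{2|V(G)|-4+i}=q(1)$, and therefore $|E(G)|+\dfrac{p(1,1)}{2|V(G)|-4+i}=w(\widetilde{\nu_H})$.

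Putting these together: $\gamma'_R(G)\le k$ means there is an edge Roman dominating function $\widetilde{\nu_H}$ of weight $\le k$, and since $k<|E(G)|$ every such function uses only labels that constitute a $\{0,-1,y\}$-coloring captured by $C(0,-1,y)$ (no genuine restriction here — every function into $\{0,1,2\}$ arises this way). By the equivalences above, this is the same as the existence of $p=s\bigl(H*_f R[x]\bigr)$ in the spectrum with the non-imaginary-coefficient condition and with $|E(G)|+\dfrac{p(1,1)}{2|V(G)|-4+i}\le k$, which is exactly the displayed set being nonempty. The converse is the same chain read backwards.

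I expect the main obstacle to be the bookkeeping in the previous paragraph, namely verifying that $p(1,1)=(2|V(G)|-4+i)\,s(H)(1)$ cleanly, i.e.\ that every edge of $H$ — edge and non-edge of $G$ alike — is counted by the \emph{same} multiplier $2|V(G)|-4+i$ independently of which vertices it joins, and that the two substitutions $x\mapsto 1$ then $y\mapsto 1$ commute with the $*_f$ operation the way I claimed. One should also double-check the edge case where $|V(G)|=2$ so that the multiplier $2|V(G)|-4+i=i$ is still nonzero (it is), keeping the division well-defined; the hypothesis $k<|E(G)|$ only serves to make the ``$C(0,-1,y)$ captures everything'' step vacuous and plays no deeper role. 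Everything else is a direct citation of Lemma~\ref{l0-1y}, Lemma~\ref{w(f)}, and the two edge-Roman lemmas, assembled as in the antimagic theorem.
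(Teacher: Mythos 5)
Your proposal is correct and follows essentially the same route as the paper: reduce the coefficient condition to the edge Roman dominating property via the preceding lemmas, and use the substitution $x=1$ giving $R[1]=(2|V(G)|-4+i)\cdot I(K_{|V(G)|})$ together with Lemma~\ref{w(f)} to identify $|E(G)|+\frac{q(1,1)}{2|V(G)|-4+i}$ with $w(\widetilde{\nu_H})$. Your multiplicity count ($2(|V(G)|-2)$ pairs contributing $1$ plus one pair contributing $i$) is exactly the paper's computation.
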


\begin{proof}
By the previous lemma, the condition for coefficients is equivalent to the fact that in the set \[
I(G)*C(0,-1,y)
\] there is a graph $H$ satisfying the edge Roman dominance condition and by Lemma \ref{l0-1y} this is the right set. It remains to show that the condition \[
|E(G)|+\frac{q(1,1)}{2|V(G)|-4+i}\leq k
\]
is equivalent to that $w(\widetilde{\nu_H})\leq k$.

If we now substitute $x=1$, we will get 
\[
R[1]=\sum_{
\begin{subarray}{c}j,k \in \{0,\dots,|V(G)|-1\} \\ j<k
\end{subarray}
} R(j,k)=
(2|V(G)|-4+i)\cdot I(K_{|V(G)|}).
\]
The last equation holds because the
edge $\{m,l\}$ has weight $1$ in $
2(|V(G)|-2)$
summands, $i$ in one summand, and in the remaining summands it is $0$.
Let now \[q \in s\bigl(I(G)*C(0,-1,y)*R[x]\bigr) \subseteq \mathbb{C}[x,y],\]
after substitution $x=1$, hence
\begin{align*}
q(1,y) &\in s\bigl(I(G)*C(0,-1,y)*(2|V(G)|-4+i)\cdot I(K_{|V(G)|})\bigr)\\
\intertext{and since $s\left(H*I\left(K_{|V(G)|}\right)\right)=s(H)$ we obtain that} 
\frac{q(1,y)}{2|V(G)|-4+i}  &\in s\bigl(I(G)*C(0,-1,y)\bigr).
\end{align*}
Now by Lemma \ref{w(f)} for $
H\in I(G)*C(0,-1,y) $ we have $w(\widetilde{\nu_H})=|E(G)|+s(H)(1)$ and thus
\[
|E(G)|+\frac{q(1,1)}{2|V(G)|-4+i}=w(\widetilde{\nu_H}).
\]
\end{proof}

{
In this paper, we were studying magic type-colorings of graphs or domination of graphs but one can also consider some other graph problems like other colorings of graphs or clique number, independence number and so on. 
}

\bibliography{biblio}

\end{document}